\newtheorem{theorem}{Theorem}[section]
\newtheorem{lemma}[theorem]{Lemma}
\newtheorem{proposition}[theorem]{Proposition}
\theoremstyle{definition}
\theoremstyle{remark}
\newtheorem{remark}[theorem]{Remark}
\numberwithin{equation}{section}
\newcounter{mr@ConstantsCounter}
\newcommand{\mconst}{
  \stepcounter{mr@ConstantsCounter}
  \ensuremath{c_\text{\themr@ConstantsCounter}}
}
\newcommand{\mcdef}[1]{
  \stepcounter{mr@ConstantsCounter}
  \protected@write\@auxout{}{
    \string\newlabel{#1}{
      {\themr@ConstantsCounter}{}{}{constant.\themr@ConstantsCounter}{}}
    }
    \hypertarget{constant.\themr@ConstantsCounter}{\mcref{#1}
  }
}
\newcommand{\mcref}[1]{
  \ensuremath{c_\text{\ref{#1}}}
}
\def\MRnum#1 #2\empty{#1}
\renewcommand{\MRhref}[2]{%
  \href{http://www.ams.org/mathscinet-getitem?mr=#1}{#2}
}
\renewcommand{\MR}[1]{%
  \relax\ifhmode\unskip\space\fi
  \MRhref{\MRnum#1\empty}{\texttt{\Tiny[MR\MRnum#1\empty]}}
}
\newcommand{\arxiv}[1]{\href{http://arxiv.org/abs/#1}{arXiv:#1}}
\DeclareMathOperator{\Div}{div}
\DeclareMathOperator{\Span}{span}
\DeclareMathOperator{\Tr}{Tr}
\newcommand{\e}{\operatorname{e}}       
\newcommand{\R}{\mathbf{R}}             
\newcommand{\scalar}[1]{\langle #1 \rangle}
\newcommand{\field}[1][F]{\mathscr{#1}} 
\newcommand{\Prob}{\mathbb{P}}          
\newcommand{\E}{\mathbb{E}}             
\newcommand{\loc}{{\textup{\tiny loc}}}
\newcommand{\uno}{\mathbbm{1}}          
\newcommand{\memo}[1]{ 
  \ensuremath{
    \framebox{\tiny\text{\kern-2pt\textsf{#1}}\kern-2pt}
  }
  \xspace
}
\DeclareMathOperator{\argmin}{arg\,min}
\newcommand{\cov}{\mathcal{S}}
\begin{document}
  \title[Time regularity of the densities]{Time regularity of the densities for the Navier--Stokes equations with noise}
  \author[M. Romito]{Marco Romito}
    \address{Dipartimento di Matematica, Università di Pisa, Largo Bruno Pontecorvo 5, I--56127 Pisa, Italia}
    \email{\href{mailto:romito@dm.unipi.it}{romito@dm.unipi.it}}
    \urladdr{\url{http://www.dm.unipi.it/pages/romito}}
  \subjclass[2010]{Primary 76M35; Secondary 60H15, 60G30, 35Q30}
  \keywords{Density of laws, Navier-Stokes equations, stochastic partial
    differential equations, Besov spaces, Girsanov transformation,
    time regularity of densities.}
  \date{}
  \begin{abstract}
    We prove that the density of the law of any finite dimensional projection
    of solutions of the Navier--Stokes equations with noise in dimension $3$
    is H\"older continuous in time with values in the natural space $L^1$.
    When considered with values in Besov spaces, H\"older continuity
    still holds. The H\"older exponents correspond, up to arbitrarily small
    corrections, to the expected diffusive scaling.
  \end{abstract}
\maketitle
\section{Introduction}

When dealing with a stochastic evolution PDE, the solution
depends not only on the time and space independent variables,
but also on the ``chance'' variable, that plays a completely
different role. Existence of a density for the distribution
of the solution is thus a form of regularity with respect
to the new variable. In infinite dimension there is no
canonical reference measure, therefore often existence of
densities is expected for finite dimensional functionals
of the solution.

This paper is a continuation of \cite{DebRom2014} and its
aim is to give an additional understanding of
the law of solutions of the Navier--Stokes equations driven
by noise in dimension three. More precisely, consider the
Navier--Stokes equations either on a smooth bounded domain
with zero Dirichlet boundary condition or on the 3D torus
with periodic boundary conditions and zero spatial mean,
\begin{equation}\label{e:nse}
  \begin{cases}
    \dot u + (u\cdot\nabla)u + \nabla p = \nu\Delta u + \dot\eta,\\
    \Div u = 0,
  \end{cases}
\end{equation}
where $u$ is the velocity, $p$ the pressure and $\nu$ the viscosity
of an incompressible fluid, and $\dot\eta$ is Gaussian noise, white
in time and coloured in space (see \cite{Fla2008} for a survey).
Existence of a density for finite
dimensional projections of the solution of \eqref{e:nse} and its
regularity in terms of Besov spaces was proved in \cite{DebRom2014}.
In this paper we prove that those densities are almost $\frac12$--%
H\"older continuous in time with values in $L^1$, as well as
with values in suitable Besov spaces defined on the finite
dimensional target space.

In a way, the results we obtain in this paper are not surprising.
After all we are dealing with a diffusion process and we already
know from \cite{DebRom2014} that the density has (in terms of Besov
regularity) almost one derivative. It is then expected that
the time regularity is of the order of (almost) half a derivative.
Likewise, if we look at the regularity of the derivative of order
$\alpha$, with $\alpha\in(0,1)$, a fair expectation
is that its time regularity is of order (almost) $\tfrac\alpha2$.
On the other hand, space regularity has been obtained
in a non--standard way by means of the method introduced
in \cite{DebRom2014}. As we will see time regularity requires
as well a non--trivial proof that mixes the method
of \cite{DebRom2014} with arguments based on the Girsanov
transformation. We believe that this adds value to the paper.

In a way, the problem at hand here can be considered
as part of a general attempt on proving existence and
regularity of densities of problems where, in principle,
Malliavin calculus is not immediately applicable. Here
the loss of regularity emerges due to infinite
dimension. To quickly understand that Malliavin
calculus is not directly applicable here, one can realize
that the equation that the Malliavin derivative of
the solution of \eqref{e:nse} should satisfy is
essentially the linearization (around $0$) of \eqref{e:nse}.
No good estimates on the linearization of \eqref{e:nse}
are available so far, as they could be used for
uniqueness as well.

The method we use has been developed in
\cite{DebRom2014}, starting from an idea of \cite{FouPri2010}
(see also \cite{Rom2013a} for a slightly more detailed account).
Later the same idea has been used in \cite{DebFou2013,Fou2012}.
An improvement of \cite{FouPri2010} in a different direction
has been given in \cite{BalCar2012}. Other attempts to handle
non--smooth problems are \cite{Dem2011}, and
\cite{KohTan2012,HayKohYuk2013,HayKohYuk2014}.
\section{Main results}

\subsection{Notations}

If $K$ is an Hilbert space, we denote by $\pi_F:K\to K$
the orthogonal projection of $K$ onto a subspace $F\subset K$, and by
$\Span[x_1,\dots,x_n]$ the subspace of $K$ generated by
its elements $x_1,\dots,x_n$. Given a linear operator
$\mathcal{Q}:K\to K'$, we denote by $\mathcal{Q}^\star$
its adjoint.
\subsubsection{Function spaces}

We recall the definition of Besov spaces. The general
definition is based on the Littlewood--Paley decomposition, but
it is not the best suited for our purposes. We shall use
an alternative equivalent definition (see \cite{Tri1983,Tri1992})
in terms of differences. Given $f:\R^d\to\R$, define
\[
  \begin{gathered}
    (\Delta_h^1f)(x)
      = f(x+h)-f(x),\\
    (\Delta_h^nf)(x)
      = \Delta_h^1(\Delta_h^{n-1}f)(x)
      = \sum_{j=0}^n (-1)^{n-j}\binom{n}{j} f(x+jh),
  \end{gathered}
\]
and, for $s>0$, $1\leq p\leq\infty$, $1\leq q<\infty$,
\[
  [f]_{B_{p,q}^s}
    = \Bigl(\int_{\{|h|\leq 1\}}\frac{\|\Delta_h^n f\|_{L^p}^q}{|h|^{sq}}
          \frac{dh}{|h|^d}\Bigr)^{\frac1q},
\]
and for $q=\infty$,
\[
  [f]_{B_{p,\infty}^s}
    = \sup_{|h|\leq 1}\frac{\|\Delta_h^n f\|_{L^p}}{|h|^s},
\]
where $n$ is any integer larger than $s$.
Given $s>0$, $1\leq p\leq\infty$ and $1\leq q\leq\infty$,
define
\[
  B_{p,q}^s(\R^d)
    = \{f: \|f\|_{L^p} + [f]_{B_{p,q}^s}<\infty\}.
\]
This is a Banach space when endowed with the norm
$\|f\|_{B_{p,q}^s} := \|f\|_{L^p} + [f]_{B_{p,q}^s}$.

When in particular $p=q=\infty$ and $s\in(0,1)$,
the Besov space $B_{\infty,\infty}^s(\R^d)$
coincides with the H\"older space $C^s_b(\R^d)$,
and in that case we will denote by $\|\cdot\|_{C^s_b}$
and $[\cdot]_{C^s_b}$ the corresponding norm and
semi--norm.
\subsubsection{Navier Stokes framework}

Let $H$ be the standard space of square summable divergence free
vector fields, defined as the closure of divergence free smooth
vector fields satisfying the boundary condition (either zero
Dirichlet or periodic, with zero spatial mean in the latter case),
with inner product $\scalar{\cdot,\cdot}_H$ and norm $\|\cdot\|_H$.
Define likewise $V$ as the closure of the same space of test
functions with respect to the $H^1$ norm.

Let $\Pi_L$ be the Leray projector, $A=-\Pi_L\Delta$ the Stokes
operator, and denote by $(\lambda_k)_{k\geq1}$ and $(e_k)_{k\geq1}$
the eigenvalues and the corresponding orthonormal basis of eigenvectors
of $A$. Define the bi--linear operator $B:V\times V\to V'$
as $B(u,v) = \Pi_L\left( u\cdot\nabla v\right)$, $u,v\in V$,
and recall that $\scalar{u_1, B(u_2,u_3)}=-\scalar{u_3, B(u_2,u_1)}$.
We refer to Temam \cite{Tem1995} for a detailed account of all
the above definitions.

The noise $\dot\eta=\cov\dot W$ in \eqref{e:nse} is coloured
in space by a covariance operator $\cov^\star\cov\in\mathscr{L}(H)$,
where $W$ is a cylindrical Wiener process (see \cite{DapZab1992}
for further details). We assume that $\cov^\star\cov$ is trace--class
and we denote by $\sigma^2 = \Tr(\cov^\star\cov)$
its trace. Finally, consider the sequence $(\sigma_k^2)_{k\geq1}$
of eigenvalues of $\cov^\star\cov$, and let $(q_k)_{k\geq1}$ be the
orthonormal basis in $H$ of eigenvectors of $\cov^\star\cov$.
\subsection{Galerkin approximations}

With the above notations, we can recast problem \eqref{e:nse}
as an abstract stochastic equation,
\begin{equation}\label{e:nseabs}
  du + (\nu Au + B(u))\,dt = \cov\,dW,
\end{equation}
with initial condition $u(0)=x\in H$. It is well--known \cite{Fla2008}
that for every $x\in H$ there exist a martingale solution of this equation,
that is a filtered probability space $(\widetilde\Omega,
\widetilde{\field},\widetilde{\Prob},\{\widetilde{\field}_t\}_{t\geq 0})$,
a cylindrical Wiener process $\widetilde W$ and a process $u$ with trajectories
in  $C([0,\infty);D(A)')\cap L^\infty_\loc([0,\infty),H)\cap
L^2_\loc([0,\infty);V)$
adapted to $(\widetilde{\field}_t)_{t\geq 0}$ such that the above
equation is satisfied with $\widetilde W$ replacing $W$.

We will consider in particular solutions of \eqref{e:nse} obtained
as limits of Galerkin approximations.
Given an integer $N\geq 1$, denote by $H_N$ the sub--space
$H_N = \Span[e_1,\dots,e_N]$ and denote by $\pi_N = \pi_{H_N}$
the projection onto $H_N$. It is standard (see for instance
\cite{Fla2008}) to verify that the problem
\begin{equation}\label{e:galerkin}
  du^N + \bigl(\nu Au^N + B^N(u^N))\,dt = \pi_N\cov\,dW,
\end{equation}
where $B^N(\cdot) = \pi_N B(\pi_N\cdot)$, admits a unique strong
solution for every initial condition $x^N\in H_N$. Moreover,
\begin{equation}\label{e:Gbound}
  \E\Bigl[\sup_{[0,T]}\|u^N\|_H^p\Bigr]
    \leq c_p(1+\|x^N\|_H^p),
\end{equation}
for every $p\geq1$ and $T>0$, where $c_p$ depends
only on $p$, $T$ and the trace of $\cov\cov^\star$.

If $x\in H$, $x^N = \pi_N x$ and $\Prob^N_x$ is the distribution
of the solution of the problem above with initial condition $x^N$,
then any limit point of $(\Prob^N_x)_{N\geq1}$ is a solution
of the martingale problem associated to \eqref{e:nse} with initial
condition $x$.
\begin{remark}
  In general, there is nothing special with the basis provided by
  the eigenvectors of the Stokes operator and our results would
  work when applied to Galerkin approximations generated by
  any (smooth enough) orthonormal basis of $H$. The crucial
  assumption is that the solution is a limit point of finite
  dimensional approximations. Some of the results concerning
  densities (but not those in this paper) can be generalized
  to any martingale weak solution of \eqref{e:nseabs}, see
  \cite{Rom2014a}.
\end{remark}
\subsection{Assumptions on the covariance}

Given a finite dimensional subspace $F$ of $H$,
we assume the following non degeneracy condition on
the covariance,
\begin{equation}\label{e:hpgirsanov}
  \cov x = f
    \qquad\text{has a solution for every }f\in F,
\end{equation}
The condition above is stronger than the condition
\begin{equation}\label{e:hpbesov}
  \pi_F\cov\cov^\star\pi_F
    \quad\text{is a non--singular matrix},
\end{equation}
used in \cite{DebRom2014} to prove bounds on the
Besov norm of the density. It is not clear if our results
here may be true under the weaker assumption
\eqref{e:hpbesov} (see Remark~\ref{r:besovmaybe} though).

Indeed, for our method --- that works through finite
dimensional approximations, it is convenient to
assume a slightly stronger version of \eqref{e:hpgirsanov},
namely that
\begin{equation}\label{e:hpgirsanov2}
  \pi_N\cov x = f
    \text{\quad has a solution for every }f\in F,
\end{equation}
for $N$ large enough.
\subsection{Continuity in time of the density}

Our first main result is that densities of finite dimensional
projections of solutions of \eqref{e:nseabs} are
continuous (actually H\"older with exponent almost $\tfrac12$)
with respect to time 
with values in the natural space $L^1$ of densities.
\begin{theorem}\label{t:main1}
  Let $F$ be a finite dimensional subspace of 
  $D(A)$ generated by a finite set of eigenvalues
  of the Stokes operator, and assume \eqref{e:hpgirsanov2}.
  
  Given $\alpha\in(0,1)$, there is $\mcdef{cc:main1}>0$
  such that if $x\in H$ and $u$ is a weak solution of
  \eqref{e:nseabs} with initial condition $x$ that
  is a limit point of Galerkin approximations,
  if $f(\cdot;x)$ is the density with respect to the
  Lebesgue measure on $F$ of the random variable
  $\pi_F u(\cdot)$, then
  \[
    \|f(t;x) - f(s;x)\|_{L^1(F)}
      \leq \mcref{cc:main1}(1+s\vee t)^{\frac{1-\alpha}2}
        \|f(s\wedge t)\|_{B^\alpha_{1,\infty}}(1 + \|x\|_H^2)^2
        |t-s|^{\frac\alpha2},
  \]
  for every $s,t>0$.
\end{theorem}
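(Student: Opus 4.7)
\medskip
\noindent\textbf{Proof proposal.}

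The plan is to argue at the level of the Galerkin approximations $u^N$ (whose laws on $F$ admit densities $f^N(t;x)$), establish the bound with constants uniform in $N$, and pass to the limit at the end using weak convergence along a subsequence together with lower semicontinuity of the $L^1$ norm. WLOG $s<t$. Set $v^N = \pi_F u^N$ and split the increment on $[s,t]$ as
\[
  v^N(t) - v^N(s) = D_{s,t} + M_{s,t},
  \qquad
  D_{s,t} = -\int_s^t \pi_F\bigl(\nu Au^N_r + B^N(u^N_r)\bigr)\,dr,
  \qquad
  M_{s,t} = \pi_F\cov(W(t)-W(s)).
\]
By $L^1$--$L^\infty$ duality, $\|f^N(t;x) - f^N(s;x)\|_{L^1(F)}$ is bounded above by a constant times $\sup_\phi |\E[\phi(v^N(t)) - \phi(v^N(s))]|$ over bounded measurable $\phi:F\to\R$ with $\|\phi\|_\infty\leq1$. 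Inserting the intermediate random variable $v^N(s) + M_{s,t}$ yields a decomposition $I_{\text{diff}} + I_{\text{drift}}$ with $I_{\text{diff}} = \E[\phi(v^N(s)+M_{s,t}) - \phi(v^N(s))]$ and $I_{\text{drift}} = \E[\phi(v^N(t)) - \phi(v^N(s)+M_{s,t})]$.

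For $I_{\text{diff}}$, note that $M_{s,t}$ is Gaussian on $F$ with covariance $(t-s)\pi_F\cov\cov^\star\pi_F$ and is independent of $\mathcal{F}_s$, hence of $v^N(s)$. Conditioning on $M_{s,t}$ and changing variables in the integral against $f^N(s,\cdot;x)$,
\[
  |I_{\text{diff}}| \leq \E\bigl[\|f^N(s,\cdot - M_{s,t};x) - f^N(s,\cdot;x)\|_{L^1(F)}\bigr].
\]
By the difference characterization of the Besov seminorm (using the trivial bound $\leq 2\|f^N(s)\|_{L^1} = 2$ for $|M_{s,t}|>1$ and the Besov estimate for $|M_{s,t}|\leq 1$), the inner norm is bounded by $\|f^N(s)\|_{B^\alpha_{1,\infty}}(|M_{s,t}|^\alpha\wedge 1)$. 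Since $\E[|M_{s,t}|^\alpha]\lesssim(t-s)^{\alpha/2}$, we obtain $|I_{\text{diff}}|\lesssim\|f^N(s)\|_{B^\alpha_{1,\infty}}(t-s)^{\alpha/2}$.

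For $I_{\text{drift}}$, the key is to kill the drift on $[s,t]$ by Girsanov. By assumption \eqref{e:hpgirsanov2}, for $N$ large enough $\pi_F\cov$ admits a bounded right inverse $\cov_F^{-1}:F\to H$, so the process $\psi_r = -\cov_F^{-1}\pi_F(\nu Au^N_r+B^N(u^N_r))$ is adapted, and under the measure $Q$ with density $L_t = \exp(\int_s^t\psi_r\,dW_r - \tfrac12\int_s^t|\psi_r|^2\,dr)$ the process $\tilde W_r = W_r-\int_s^{r\vee s}\psi_u\,du$ is a Brownian motion and $v^N(t)-v^N(s)=\pi_F\cov(\tilde W(t)-\tilde W(s))$. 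Since $L_s=1$ and $\E_P[L_t\mid\mathcal{F}_s]=1$, the law of $v^N(s)$ is unchanged and $\tilde W(t)-\tilde W(s)$ is independent of $\mathcal{F}_s$ under $Q$ with the same law as $M_{s,t}$ under $P$. Therefore $\E_Q[\phi(v^N(t))]=\E_P[\phi(v^N(s)+M_{s,t})]$ and
\[
  I_{\text{drift}} = \E_P[\phi(v^N(t))(1-L_t)] \leq \|\phi\|_\infty\,\E_P[(L_t-1)^2]^{1/2}.
\]
By It\^o's formula on $[s,t]$ the square moment reduces to $\E_P[\int_s^t L_r^2|\psi_r|^2\,dr]$; using $F\subset D(A)$ one has $|\psi_r|^2\leq c(1+\|u^N_r\|_H^2)^2$, and Cauchy--Schwarz combined with the Galerkin moment bound \eqref{e:Gbound} and a Novikov-type control of $\E_P[\sup_{r\in[s,t]}L_r^p]$ (if needed, via a stopping time at $\|u^N\|_H=R$ and $R\to\infty$) yields $\E_P[(L_t-1)^2]^{1/2}\lesssim(t-s)^{1/2}(1+\|x\|_H^2)^2$.

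Combining the two parts, writing $(t-s)^{1/2}=(t-s)^{\alpha/2}(t-s)^{(1-\alpha)/2}\leq(t-s)^{\alpha/2}(1+s\vee t)^{(1-\alpha)/2}$ converts the $1/2$ exponent in the drift contribution into the claimed $\alpha/2$ at the price of the polynomial factor in $s\vee t$. The main obstacle is the Girsanov step: $L_t$ lives on the exponential scale while the 3D Navier--Stokes approximations have only polynomial moments, so verifying enough integrability of $L_r$ --- and getting the factor $(1+\|x\|_H^2)^2$ rather than an uncontrolled exponential --- requires estimating $\E_P[(L_t-1)^2]$ directly via It\^o rather than via Novikov, and probably introducing a localization in $\|u^N\|_H$. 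The passage to the limit $N\to\infty$ is then routine once uniform bounds are available.
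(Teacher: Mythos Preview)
Your decomposition into $I_{\text{diff}}$ and $I_{\text{drift}}$ is sound, and the treatment of $I_{\text{diff}}$ is essentially the paper's own argument (the term called \memo{b} in Lemma~\ref{l:Phiincrement}, done there through the heat representation of Lemma~\ref{l:rep}). The genuine gap is in $I_{\text{drift}}$. You bound $\E_P[(1-L_t)\phi(v^N(t))]$ by $\E_P[(L_t-1)^2]^{1/2}$ and then write, via It\^o, $\E_P[(L_t-1)^2]=\E_P\bigl[\int_s^t L_r^2|\psi_r|^2\,dr\bigr]$. But this integrand contains $L_r^2$, and $\E_P[L_r^2]=\E_P\bigl[\exp\bigl(\int_s^r|\psi_u|^2\,du\bigr)\bigr]$ requires an \emph{exponential} moment of $\int_s^r(1+\|u^N\|_H^2)^2$, which \eqref{e:Gbound} does not provide. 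The proposed localization at level $R$ does not close the loop: to keep the exponential factor bounded one needs $(1+R^2)^2(t-s)\lesssim1$, but then $(1+R^2)(t-s)^{1/2}$ is of order one, while the tail $\Prob[\tau_R<t]$ decays only polynomially in $R$; no choice of $R$ recovers the rate $(t-s)^{1/2}$ with a polynomial constant in $\|x\|_H$. (There is also a prior issue you skip: without Novikov, even $\E_P[L_t]=1$ is not immediate; the paper handles this by inserting stopping times $\tau_n$ and working with the truncated densities $G_t^n$, cf.\ Section~\ref{s:girsanov}.)

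The paper's fix is to replace the $L^2$ bound by an $L\log L$ bound. From the martingale property and the elementary inequality $e^x-1\leq(1\wedge|x|)e^x$ one gets $\E_P[|L_t-1|]\leq 2\,\E_P\bigl[L_t|\log L_t|\bigr]$ (Lemma~\ref{l:Gincrement}). The crucial observation is then that $\E_P[L_t|\log L_t|]=\E_Q[|\log L_t|]$: after changing to $Q$, the quantity $|\log L_t|$ is controlled by a stochastic integral plus $\tfrac12\int_s^t|\psi_r|^2\,dr$, and these require only \emph{polynomial} moments of the process under $Q$, which are available (Lemma~\ref{l:logG}). Equivalently, this is Pinsker's inequality together with the Girsanov formula for relative entropy. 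The paper in fact runs the Girsanov in the opposite direction (the auxiliary drift-free process $v$ is defined under $P$, and the density $G^n$ sends its law to that of $u$), so that the measure change lands back on $u$ itself and one can quote \eqref{e:Gbound} directly rather than proving separate moment bounds for the drift-free process. Once you make this substitution for $I_{\text{drift}}$, your argument goes through and yields exactly the factor $(t-s)^{1/2}(1+\|x\|_H^2)^2$.
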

The theorem above follows immediately from
Proposition~\ref{p:elleuno} and lower semicontinuity.
Notice that the term $\|f(s\wedge t)\|_{B^\alpha_{1,\infty}}$
is singular when $s\wedge t$ approaches $0$ (see
Lemma~\ref{l:timedep}).

By trading time--continuity with space--time continuity,
we can obtain an estimate similar to the one given in
the above theorem for the Besov norm of the density.
\begin{theorem}\label{t:main2}
  Let $F$ be a finite dimensional subspace of 
  $D(A)$ generated by a finite set of eigenvalues
  of the Stokes operator, and assume \eqref{e:hpgirsanov2}.

  Given $\alpha,\beta\in(0,1)$ with
  $\alpha+\beta<1$, there is $\mcdef{cc:main2}>0$
  such that if $x\in H$ and $u$ is a weak solution of
  \eqref{e:nseabs} with initial condition $x$ that
  is a limit point of Galerkin approximations,
  if $f(\cdot;x)$ is the density with respect to the
  Lebesgue measure on $F$ of the random variable
  $\pi_F u(\cdot)$, then
  \[
    \|f(t;x) - f(s;x)\|_{B^{\alpha}_{1,\infty}}
      \leq\mcref{cc:main2}|t-s|^{\frac\beta2},
  \]
  for every $s,t>0$, where
  \[
    \mcref{cc:main2}
      \approx (1+s\vee t)^{\frac{1-\beta}2}(1+\|x\|_H^2)^3
        \bigl([f(t)]_{B^{1-\delta}_{1,\infty}}
         + [f(s)]_{B^{1-\delta}_{1,\infty}}\bigr),
  \]
  and $\delta<1-(\alpha+\beta)$.
\end{theorem}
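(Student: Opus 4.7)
The plan is to interpolate the uniform--in--time $B^{1-\delta}_{1,\infty}$ regularity of $f(t;x)$ established in \cite{DebRom2014} against the $L^1$ time--regularity just proved in Theorem~\ref{t:main1}. The elementary ingredient is the interpolation inequality: for any $g\in L^1(F)\cap B^{1-\delta}_{1,\infty}(F)$ and any $\alpha\in(0,1-\delta)$,
\[
  [g]_{B^\alpha_{1,\infty}}
    \lesssim \|g\|_{L^1}^{\theta}\,[g]_{B^{1-\delta}_{1,\infty}}^{1-\theta},
  \qquad \theta:=\frac{1-\delta-\alpha}{1-\delta}\in(0,1).
\]
This follows directly from the difference--based definition of the Besov seminorm, by combining the trivial bound $\|\Delta_h^n g\|_{L^1}\leq 2^n\|g\|_{L^1}$ with $\|\Delta_h^n g\|_{L^1}\leq[g]_{B^{1-\delta}_{1,\infty}}|h|^{1-\delta}$ and splitting the supremum over $|h|$ at the scale where the two estimates balance.

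Applying this with $g=f(t;x)-f(s;x)$ and using the triangle inequality $[g]_{B^{1-\delta}_{1,\infty}}\leq[f(t)]_{B^{1-\delta}_{1,\infty}}+[f(s)]_{B^{1-\delta}_{1,\infty}}$ reduces the problem to estimating $\|g\|_{L^1}$. For any $\gamma\in(0,1)$, Theorem~\ref{t:main1} gives
\[
  \|g\|_{L^1}
    \leq \mcref{cc:main1}(1+s\vee t)^{\frac{1-\gamma}{2}}
      \|f(s\wedge t)\|_{B^\gamma_{1,\infty}}(1+\|x\|_H^2)^2\,|t-s|^{\gamma/2},
\]
so the interpolation inequality produces a seminorm bound of order $|t-s|^{\gamma\theta/2}$, while the $L^1$ component of $\|g\|_{B^\alpha_{1,\infty}}$ is controlled directly by $|t-s|^{\gamma/2}\geq|t-s|^{\gamma\theta/2}$. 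To match the target exponent $\beta/2$ one needs $\gamma\theta\geq\beta$, i.e.\ $\gamma\geq\beta(1-\delta)/(1-\delta-\alpha)$. Since $\delta<1-(\alpha+\beta)$, this threshold is strictly less than $1$, so one can pick $\gamma<1$ above it, and simultaneously below $1-\delta_0$ for some small $\delta_0>0$ so that $\|f(s\wedge t)\|_{B^\gamma_{1,\infty}}$ is finite by the Besov regularity result of \cite{DebRom2014}.

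Assembling the constants recovers the displayed form of $\mcref{cc:main2}$: the factor $(1+s\vee t)^{(1-\gamma)/2}$ from Theorem~\ref{t:main1} is dominated by $(1+s\vee t)^{(1-\beta)/2}$ because $\gamma\geq\beta$; the $\|x\|_H$--dependence, coming from $(1+\|x\|_H^2)^2$ in Theorem~\ref{t:main1} together with the polynomial $\|x\|_H$--growth of the Besov bound of \cite{DebRom2014}, is controlled by $(1+\|x\|_H^2)^3$; and the seminorms $[f(t)]_{B^{1-\delta}_{1,\infty}}$, $[f(s)]_{B^{1-\delta}_{1,\infty}}$ appear to a power $1-\theta<1$, hence are bounded by $1+[f(t)]_{B^{1-\delta}_{1,\infty}}+[f(s)]_{B^{1-\delta}_{1,\infty}}$ as in the statement. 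The only real obstacle in this argument is to make sure that Theorem~\ref{t:main1} is effectively available with exponent arbitrarily close to $1$ (otherwise the admissible range of $(\alpha,\beta)$ would shrink) and that the corresponding $\|f(s\wedge t)\|_{B^\gamma_{1,\infty}}$ is finite; both points are guaranteed by the quoted results.
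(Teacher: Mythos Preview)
Your argument is correct and genuinely different from the paper's. You proceed by real interpolation: bound $[f(t)-f(s)]_{B^\alpha_{1,\infty}}$ via the elementary inequality
\[
  [g]_{B^\alpha_{1,\infty}}
    \lesssim \|g\|_{L^1}^{\theta}\,[g]_{B^{1-\delta}_{1,\infty}}^{1-\theta},
  \qquad \theta=\tfrac{1-\delta-\alpha}{1-\delta},
\]
and feed in Theorem~\ref{t:main1} for the $L^1$ factor and the uniform $B^{1-\delta}_{1,\infty}$ bound of \cite{DebRom2014} for the other. The numerology $\gamma\theta\geq\beta$, together with $\delta<1-(\alpha+\beta)$ so that the required $\gamma$ can be chosen in $(0,1)$ (indeed in $(0,1-\delta)$, which also lets you dominate $\|f(s\wedge t)\|_{B^\gamma_{1,\infty}}$ by $1+[f(s\wedge t)]_{B^{1-\delta}_{1,\infty}}$), checks out.

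The paper instead reruns the smoothing--lemma machinery directly on the signed measure with density $f(t)-f(s)$: it estimates $\int\phi\,\Delta_h^n(f(t)-f(s))$ by splitting into $|t-s|\leq|h|^2$ and $|t-s|>|h|^2$, introduces the auxiliary process $u_\epsilon$ of \eqref{e:besovkill}, and controls a ``probabilistic'' piece via Lemma~\ref{l:brownian} and a ``numerical'' piece via a Girsanov estimate (Lemma~\ref{l:numgirsanov}), before invoking Lemma~\ref{l:smoothing}. Your route is shorter and uses only the two black boxes already available; the paper's route is more self--contained, produces the constant in the stated multiplicative form without the fractional powers and the extra ``$+1$'' your interpolation leaves behind, and is what makes the Girsanov--free variant in Remark~\ref{r:besovmaybe} visible. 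Since the statement only claims the constant up to $\approx$, the cosmetic discrepancy in your constant (factors $\|f(s\wedge t)\|_{B^\gamma_{1,\infty}}^\theta$ and $([f(t)]+[f(s)])^{1-\theta}$ in place of $[f(t)]+[f(s)]$) is harmless.
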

The proof of this theorem is given by means of
Proposition~\ref{p:besov}. A crucial tool in
the proof of both theorems is Girsanov's
transformation. This explains why we need
the slightly stronger assumption \eqref{e:hpgirsanov}
rather than the assumption \eqref{e:hpbesov}
used in \cite{DebRom2014}. Girsanov's change
of measure is used to perform a sort of fractional
integration by parts and move the tiny regularity
from space to time (see Lemma~\ref{l:Phiincrement}).
\section{The estimate in \texorpdfstring{$L^1$}{L1}}

This section is devoted to the proof of the H\"older estimate
of the density with values in $L^1$. A classical way is to
derive first some space regularity and then use it to prove
the time regularity. In a way, this is also the bulk of our
method, although due to the low regularity we have at hand
(see Lemma~\ref{l:timedep}), this can be done only after
a suitable simplification. The main tool we use here is
the Girsanov transformation and the logarithmic moments
of the Girsanov density. The version of the Girsanov theorem
we use follows from \cite[Chapter 7]{LipShi2001}.
The main result of this section is as follows.
\begin{proposition}\label{p:elleuno}
  Let $F$ be a finite dimensional subspace of 
  $D(A)$ generated by a finite set of eigenvalues
  of the Stokes operator, and assume \eqref{e:hpgirsanov2}.
  Given $\alpha\in(0,1)$, there is $\mcdef{cc:elleuno}>0$
  such that if $x\in H$, $N$ is large enough (that $F\subset H_N$)
  and $u^N$ is a solution of \eqref{e:galerkin} with initial condition
  $\pi_N x$, if $f_N(\cdot;x)$ is the density with respect to the
  Lebesgue measure on $F$ of $\pi_F u^N(\cdot)$, then
  \[
    \|f_N(t) - f_N(s)\|_{L^1(F)}
      \leq \mcref{cc:elleuno}(1+s\vee t)^{\frac{1-\alpha}2}
        \|f_N(s\wedge t)\|_{B^\alpha_{1,\infty}}(1 + \|x\|_H^2)^2
        |t-s|^{\frac\alpha2},
  \]
  for every $s,t>0$.
\end{proposition}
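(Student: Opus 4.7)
The plan is to combine duality with a Girsanov transformation that removes the drift of $\pi_F u^N$ in the $F$-direction on the interval $[s,t]$, so that the time increment becomes a pure Gaussian shift against which the Besov regularity of $f_N(s)$ can be applied. Without loss of generality assume $s<t$ and set $\tau = t-s$; since
\[
\|f_N(t)-f_N(s)\|_{L^1(F)} = \sup_{\|\phi\|_\infty\leq 1}\bigl|\E[\phi(\pi_F u^N(t)) - \phi(\pi_F u^N(s))]\bigr|,
\]
it suffices to control this expectation for an arbitrary bounded measurable $\phi:F\to\R$. By \eqref{e:hpgirsanov2}, for $N$ large $\pi_N\cov$ is surjective onto $F$, so one can choose a progressively measurable $\psi$ (via the Moore--Penrose pseudoinverse) satisfying $\pi_N\cov\psi(r) = \pi_F(\nu Au^N(r) + B^N(u^N(r)))$ for $r\in[s,t]$, and $\psi\equiv 0$ elsewhere. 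Appealing to the Girsanov theorem from \cite[Chapter 7]{LipShi2001} (with the usual stopping-time localization, since Novikov is not immediate), define $\widetilde\Prob$ with $d\widetilde\Prob/d\Prob = \exp\bigl(\int\psi\,dW - \tfrac12\int|\psi|^2\,dr\bigr)$. Under $\widetilde\Prob$, $\widetilde W_r = W_r - \int_0^r\psi\,du$ is a Brownian motion and the $F$-projection of the equation on $[s,t]$ reduces to $d\pi_F u^N = \pi_F\cov\,d\widetilde W$, so $\pi_F u^N(t) - \pi_F u^N(s) = \pi_F\cov(\widetilde W(t)-\widetilde W(s))$ is Gaussian of covariance $\tau\pi_F\cov\cov^\star\pi_F$ and independent of $\field_s$. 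Because $\psi\equiv 0$ on $[0,s]$, the restrictions of $\Prob$ and $\widetilde\Prob$ to $\field_s$ coincide, so $f_N(s)$ is the density of $\pi_F u^N(s)$ under either measure and the Radon--Nikodym derivative $\rho := d\Prob/d\widetilde\Prob$ satisfies $\rho_s = 1$, hence $\E^{\widetilde\Prob}[\rho\mid\field_s] = 1$.

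With these ingredients, decompose
\[
\E[\phi(\pi_F u^N(t))-\phi(\pi_F u^N(s))] = \E^{\widetilde\Prob}[\phi(\pi_F u^N(t))-\phi(\pi_F u^N(s))] + \E^{\widetilde\Prob}\bigl[(\rho-1)\bigl(\phi(\pi_F u^N(t))-\phi(\pi_F u^N(s))\bigr)\bigr],
\]
the cross term $\E^{\widetilde\Prob}[(\rho-1)\phi(\pi_F u^N(s))]$ vanishing by the martingale property of $\rho$ just mentioned. For the first summand, condition on $\field_s$ and use independence of the Gaussian increment to rewrite it as $\int_F\phi(y)(G_\tau * f_N(s)(y) - f_N(s)(y))\,dy$, where $G_\tau$ denotes the density of $\pi_F\cov(\widetilde W(t)-\widetilde W(s))$; the standard estimate $\|G_\tau*g-g\|_{L^1}\leq C\tau^{\alpha/2}[g]_{B^\alpha_{1,\infty}}$ controls this by $C\|\phi\|_\infty\tau^{\alpha/2}\|f_N(s)\|_{B^\alpha_{1,\infty}}$. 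For the second summand, use $|\phi(\pi_F u^N(t))-\phi(\pi_F u^N(s))|\leq 2\|\phi\|_\infty$ together with Pinsker's inequality
\[
\E^{\widetilde\Prob}[|\rho-1|] \leq 2\sqrt{2\,\mathrm{KL}(\Prob\|\widetilde\Prob)} = 2\sqrt{\E\bigl[\int_s^t|\psi|^2\,dr\bigr]}.
\]
Since $F$ is smooth and consists of eigenvectors, the antisymmetry identity $\scalar{B(u,u),\phi}=-\scalar{u,B(u,\phi)}$ yields $\|\pi_F B^N(u^N)\|_F\leq C\|u^N\|_H^2$, hence $|\psi|^2\leq C(1+\|u^N\|_H^4)$; combining with \eqref{e:Gbound} gives $\mathrm{KL}\leq C\tau(1+\|x\|_H^4)$ and a bound of $C\|\phi\|_\infty\tau^{1/2}(1+\|x\|_H^2)$ for the second summand.

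Taking the supremum over $\phi$ yields
\[
\|f_N(t)-f_N(s)\|_{L^1} \leq C\tau^{\alpha/2}\|f_N(s)\|_{B^\alpha_{1,\infty}} + C\tau^{1/2}(1+\|x\|_H^2),
\]
which is cast in the stated form by splitting into $\tau\leq 1$ (where $\tau^{1/2}\leq\tau^{\alpha/2}$) and $\tau\geq 1$ (where $s\vee t\geq\tau$ and so $(1+s\vee t)^{(1-\alpha)/2}\tau^{\alpha/2}\geq \tau^{1/2}$), absorbing the remaining factors via $\|f_N(s)\|_{B^\alpha_{1,\infty}}\geq 1$ and $1+\|x\|_H^2\leq (1+\|x\|_H^2)^2$. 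The main obstacle is making the Girsanov step fully rigorous: $|\psi|$ has only polynomial control in $\|u^N\|_H$ so that Novikov's criterion is not immediate and a localization argument is unavoidable, and in addition the norm of the pseudoinverse $(\pi_N\cov)^{-1}_F$ must be controlled uniformly in $N$, so that $\mcref{cc:elleuno}$ is independent of $N$ as required for the Galerkin limit in Theorem~\ref{t:main1}.
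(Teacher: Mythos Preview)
Your argument is correct and follows the same core idea as the paper --- remove the $F$--component of the drift by a Girsanov change of measure so that the time increment of $\pi_F u^N$ becomes a pure Gaussian shift, against which one spends the Besov regularity of $f_N(s)$ --- but the execution differs in two notable ways. First, you apply Girsanov only on the window $[s,t]$, which is natural here since it makes $\Prob$ and $\widetilde\Prob$ coincide on $\field_s$ and lets you read off $f_N(s)$ directly under $\widetilde\Prob$; the paper instead performs the change of measure on the whole interval $[0,T]$, introduces an auxiliary process $v$ with no $F$--drift, and must then change back to the original measure at time $s$ (Lemma~\ref{l:rep}, Lemma~\ref{l:Phiincrement}). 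Second, for the error term you invoke Pinsker's inequality to bound $\E^{\widetilde\Prob}[|\rho-1|]$ by $\sqrt{\mathrm{KL}}=\bigl(\tfrac12\E[\int_s^t|\psi|^2]\bigr)^{1/2}$, giving immediately the $\sqrt{t-s}\,(1+\|x\|_H^2)$ bound; the paper obtains the analogous estimate by a direct computation of $\E[G_t^n|\log(G_t^n/G_s^n)|]$ (Lemmas~\ref{l:logG}--\ref{l:Gincrement}), which yields $(1+\|x\|_H^2)^2$ --- your bound is actually slightly sharper in the $x$--dependence. The Gaussian--convolution step $\|G_\tau\ast f_N(s)-f_N(s)\|_{L^1}\leq C\tau^{\alpha/2}[f_N(s)]_{B^\alpha_{1,\infty}}$ is exactly term \memo{b} in the paper's Lemma~\ref{l:Phiincrement}. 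Where your write--up is thinner is the localization: the paper carries explicit stopping times $\tau_n$ and the processes $v^n$ throughout, together with the $\epsilon$--device of Lemma~\ref{l:tobm}, to make the Girsanov density a genuine martingale and then pass to the limit; you correctly flag this but leave it as a sketch. Since you are in finite dimensions with $\E\bigl[\int_s^t|\psi|^2\bigr]<\infty$ (by \eqref{e:Gbound}) and both SDEs (with and without the $F$--drift) have unique strong solutions, the Liptser--Shiryaev criterion you cite indeed applies, and the uniformity in $N$ follows from $(\pi_N\cov)^+=\cov^+$ under \eqref{e:hpgirsanov2}; filling this in would make your argument complete.
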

In the rest of the section we will drop, for simplicity
and to make the notation less cumbersome, the index $N$.
It is granted though that we work with solutions of the
Galerkin system \eqref{e:galerkin}.
\subsection{The Girsanov equivalence}\label{s:girsanov}

Let us assume now \eqref{e:hpgirsanov2} and consider the
following two stochastic equations on $H_N$
\[
  \begin{gathered}
    du + (\nu A u + \pi_N B(u))\,dt
      = \pi_N\cov\,dW,\\
    dv + (\pi_N-\pi_F)(\nu A v + B(v))\,dt
      = \pi_N\cov\,dW.\\
  \end{gathered}
\]
It is easy to see that both equations have a unique
strong solution for every initial condition in $H_N$.
In view of the application of the Girsanov transformation,
assume $u(0) = v(0)\in H_N$.
\subsubsection{The Moore--Penrose pseudo--inverse}

Given a linear bounded operator $\cov:H\to H$ and a finite
dimensional subspace $F\subset H$ such that $\cov x=f$
has at least one solution for every $f\in F$, define
\[
  \cov^+f
    = \argmin\{\|x\|_H: x\in H\text{ and }\cov x = f\}.
\]
It is elementary to check that the pseudo--inverse
$\cov^+:F\to H$ is well defined and is a linear
bounded operator, since given $f$ the minima $x$
are characterized by $\scalar{x,y-x}_H\geq0$
for every $y\in H$ such that $\cov y=f$.
In particular $\cov\cov^+f = f$ and,
if Assumption \eqref{e:hpgirsanov2} holds for $\cov$,
$(\pi_N\cov)^+ = \cov^+$.
\subsubsection{Reduction by the Girsanov transformation}

Fix for the rest of the section $T>0$. If $w\in C([0,T];H_N)$,
set
\[
  \tau_n(w)
    = \inf\Bigl\{t\leq T: \int_0^T \|\cov^+\pi_F\bigl(\nu Aw + B(w)\bigr)\|_H^2\,ds\geq n\Bigr\},
\]
and $\tau_n(w)=T$ if the above set is empty,
and $\chi_t^n(w)=\uno_{\{\tau_n(w)\geq t\}}$. By \eqref{e:Gbound}
$\tau_n(u)<\infty$ almost surely. Similar computations yield
that also $\tau_n(v)<\infty$ almost surely.

Let $v^n$ be the solution of
\[
  \begin{multlined}[.9\linewidth]
  v^n(t)
    = v(t\wedge\tau_n(v))
        - \int_0^t(1 - \chi_s^n(v))\pi_N(\nu Av^n+B(v^n))\,ds + {}\\
        + \int_0^t(1 - \chi_s^n(v))\pi_N\cov\,dW,
  \end{multlined}
\]
then $v^n(t) = v(t)$ on $\{\tau_n(v)\geq t\}$,
$\tau^n_t(v) = \tau^n_t(v^n)$, and
$v^n(t)\to v(t)$ almost surely. More
precisely, $v^n(t) = v(t)$ for $n$ large enough
($\omega$--wise), therefore $\phi(v^n(t))\to\phi(v(t))$
almost surely for any bounded measurable $\phi$.

Moreover, since
\[
  v(t\wedge\tau_n(v))
    = v(0)
      - \int_0^t\chi_s^n(v)(\pi_N - \pi_F)(\nu Av+B(v))\,ds
      + \int_0^t\chi_s^n(v)\pi_N\cov\,dW,
\]
it follows that
\[
  \begin{multlined}[.9\linewidth]
    v^n(t)
     = v(0)
       - \int_0^t(\nu Av^n + \pi_NB(v^n))\,ds + {}\\
       + \int_0^t\pi_N\cov\,dW
       + \int_0^t\chi_s^n(v^n)\pi_F (\nu Av^n+B(v^n))\,ds.
  \end{multlined}
\]
By the Girsanov theorem the process
\[
  \begin{multlined}[.9\linewidth]
    G_t^n
      = \exp\Bigl(\int_0^t\chi^n_s(v^n)\cov^+\pi_F(\nu Av^n+B(v^n))\,dW_s + {}\\
        -\frac12\int_0^t\chi^n_s(v^n)\|\cov^+\pi_F(\nu Av^n + B(v^n))\|_H^2\,ds\Bigr)
  \end{multlined}
\]
is a martingale and the law of $u$ on $[0,T]$ with respect to
the original probability measure $\Prob$ is equal to the law
of $v^n$ on $[0,T]$ with respect to the new probability
measure $G_T^n\Prob$.
\subsection{Increments of the Girsanov density}

In this section we estimate the time increments of the Girsanov
density. This provides half of the proof of Proposition~\ref{p:elleuno}.
\begin{lemma}\label{l:logG}
  There is $\mcdef{cc:logG}>0$ such that for every
  $0\leq s\leq t\leq T$ and every $n\geq1$,
  \[
    \E\Bigl[G_t^n\Big|\log\frac{G_n^t}{G_n^s}\Big|\Bigr]
      \leq\mcref{cc:logG}(t-s)^\frac12(1 + \|u(0)\|_H^2)^2.
  \]
\end{lemma}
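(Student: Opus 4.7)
The plan is to apply Girsanov's theorem to convert the factor $G_t^n$ into a change of probability: set $\tilde\Prob := G_T^n\Prob$, under which $v^n$ has the same law as the full Galerkin solution $u$. Writing
\[
  \log\frac{G_t^n}{G_s^n} = \int_s^t \scalar{\Psi_r,\,dW_r} - \frac12\int_s^t\|\Psi_r\|_H^2\,dr,
\]
with $\Psi_r := \chi^n_r(v^n)\cov^+\pi_F(\nu A v^n+B(v^n))$, and using that $G^n$ is a $\Prob$-martingale together with the $\field_t$-measurability of $\log(G_t^n/G_s^n)$, we obtain $\E[G_t^n\,|\log(G_t^n/G_s^n)|] = \E^{\tilde\Prob}[|\log(G_t^n/G_s^n)|]$.

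Under $\tilde\Prob$, the Girsanov shift yields a cylindrical Wiener process $\tilde W$ with $dW_r = d\tilde W_r + \Psi_r\,dr$. Substituting into the stochastic integral produces
\[
  \log\frac{G_t^n}{G_s^n} = \int_s^t\scalar{\Psi_r,\,d\tilde W_r} + \frac12\int_s^t\|\Psi_r\|_H^2\,dr,
\]
where the $-\tfrac12\int\|\Psi\|^2\,dr$ from the stochastic exponential has been converted into a positive $+\tfrac12\int\|\Psi\|^2\,dr$. Both terms now carry a favourable sign; the triangle inequality, Cauchy--Schwarz, and Itô's isometry applied to the $\tilde\Prob$-martingale piece give
\[
  \E^{\tilde\Prob}[|\log(G_t^n/G_s^n)|] \leq \Bigl(\E^{\tilde\Prob}\Bigl[\int_s^t\|\Psi_r\|_H^2\,dr\Bigr]\Bigr)^{1/2} + \frac12\,\E^{\tilde\Prob}\Bigl[\int_s^t\|\Psi_r\|_H^2\,dr\Bigr].
\]

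To control $\E^{\tilde\Prob}[\int_s^t\|\Psi_r\|_H^2\,dr]$, I would use the law equivalence between $v^n$ under $\tilde\Prob$ and $u$ under $\Prob$ (together with $\chi^n_r\leq 1$) to rewrite it as $\E[\int_s^t\|\cov^+\pi_F(\nu Au+B(u))\|_H^2\,dr]$. Since $F\subset D(A)$ is finite-dimensional and spanned by eigenvectors of $A$, the map $\pi_F A$ is bounded on $H$; the antisymmetry relation $\scalar{e_k,B(u,u)} = -\scalar{u,B(u,e_k)}$ together with smoothness of $e_k$ yields $\|\pi_F B(u)\|_H \leq c\|u\|_H^2$; and assumption \eqref{e:hpgirsanov2} makes $\cov^+\pi_F$ bounded. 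Hence $\|\Psi_r\|_H \leq c(1+\|u(r)\|_H^2)$, and the moment bound \eqref{e:Gbound} gives $\E[\int_s^t(1+\|u(r)\|_H^2)^2\,dr]\leq c(t-s)(1+\|u(0)\|_H^2)^2$. Bounding $(t-s)$ by $T^{1/2}(t-s)^{1/2}$ on $[0,T]$ and collecting the two contributions produces the stated estimate. The essential point, and the main technical obstacle, is that working under $\tilde\Prob$ turns the $-\tfrac12\int\|\Psi\|^2\,dr$ into a benign positive quadratic variation; this is what lets Itô's isometry close the estimate in square-root form without having to control higher $\Prob$-moments of $G_t^n$ itself, which would otherwise require Novikov-type estimates uniform in $n$.
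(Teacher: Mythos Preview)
Your proposal is correct and is essentially the paper's own argument. The paper also changes to the measure under which $v^n$ has the law of $u$, writes $\log(G_t^n/G_s^n)$ as a stochastic integral plus a quadratic variation term in the new Brownian motion, and then bounds each piece using the Burkholder--Davis--Gundy inequality together with \eqref{e:Gbound}; your explicit tracking of the Girsanov shift $dW_r=d\tilde W_r+\Psi_r\,dr$ and use of It\^o's isometry is just a slightly more detailed version of the same computation.
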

\begin{proof}
  By changing back the probability measure, since on the interval $[0,t]$
  $u$ under $\Prob$ has the same law of $v^n$ under $G^n_t\Prob$,
  \[
    \begin{aligned}
      \E\Bigl[G_t^n\Bigl|\log\frac{G_t^n}{G_s^n}\Bigr|\Bigr]
        &= \E\Bigl[\Bigl|\log\frac{G_t^n(u)}{G_s^n(u)}\Bigr|\Bigr]\\
        &\leq \E\Bigl[2\Big|\int_s^t\chi_r^n(u)\cov^+\pi_F(\nu Au+B(u))\,dW_r\Big|\Bigr]\\
        &\quad  + \E\Bigl[\int_s^t\chi_r^n(u)\|\cov^+\pi_F(\nu Au+B(u))\|_H^2\,dr\Bigr]\\
        &\leq \mcref{cc:logG}(t-s)^\frac12(1 + \|u(0)\|_H^2)^2,
    \end{aligned}
  \]
  where we have used the Burkholder-Davis-Gundy inequality and
  \eqref{e:Gbound}.
\end{proof}
\begin{lemma}\label{l:Gincrement}
  There is $\mcdef{cc:girsanov}>0$ such that for every
  $0\leq s\leq t\leq T$ and $n\geq1$,
  \[
    |\E[(G_t^n - G_s^n)X]|
      \leq \mcref{cc:girsanov}\|X\|_\infty(1 + \|u(0)\|_H^2)^2(t-s)^\frac12,
  \]
  where $X$ is any real bounded random variable.
\end{lemma}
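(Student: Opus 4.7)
The plan is to reduce the estimate to the quantity already controlled by Lemma~\ref{l:logG}. Since $X$ only enters through its $L^\infty$ norm, the very first step is the trivial bound
\[
  |\E[(G_t^n - G_s^n)X]|
    \leq \|X\|_\infty\,\E[|G_t^n - G_s^n|],
\]
so the whole problem is to estimate $\E[|G_t^n - G_s^n|]$ by $\mcref{cc:logG}(1+\|u(0)\|_H^2)^2(t-s)^{1/2}$ (up to a universal constant).

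The next idea is to use the fact that $G^n$ is a $\Prob$--martingale with $\E[G_t^n]=\E[G_s^n]=1$. Then $G_t^n-G_s^n$ has mean zero, so the standard symmetrization
\[
  \E[|G_t^n - G_s^n|]
    = 2\,\E[(G_t^n - G_s^n)_+]
\]
is available. I now want to rewrite the positive part in a way that matches the quantity in Lemma~\ref{l:logG}. Since the Doléans--Dade exponential $G_s^n$ is strictly positive almost surely (the integrand in its exponent is bounded thanks to the localiser $\chi^n$), I can factor $(G_t^n-G_s^n)_+ = G_s^n(Z-1)_+$ with $Z=G_t^n/G_s^n$.

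The pivotal elementary inequality is $(z-1)_+\leq z\log z$ for every $z>0$, which I would check by noting that $f(z):=z\log z-z+1$ vanishes at $z=1$ and has $f'(z)=\log z\geq 0$ on $[1,\infty)$ (while for $z\in(0,1)$ both sides of the inequality are zero or negative). Applied to $Z$ this produces the pointwise estimate
\[
  (G_t^n-G_s^n)_+
    = G_s^n(Z-1)_+
    \leq G_s^n\,Z\,\log_+Z
    \leq G_t^n\,\bigl|\log(G_t^n/G_s^n)\bigr|.
\]

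Putting everything together and applying Lemma~\ref{l:logG} directly yields
\[
  \E[|G_t^n-G_s^n|]
    \leq 2\,\E\bigl[G_t^n|\log(G_t^n/G_s^n)|\bigr]
    \leq 2\,\mcref{cc:logG}\,(t-s)^{1/2}(1+\|u(0)\|_H^2)^2,
\]
which combined with the opening reduction is the claim. I do not expect a real obstacle here: the only delicate point is the one-sided inequality $(z-1)_+\leq z\log z$, which is precisely what converts a two-sided $L^1$ increment into the relative-entropy-type quantity controlled by the previous lemma.
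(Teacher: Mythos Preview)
Your proof is correct and follows essentially the same route as the paper: both exploit that $G^n$ is a positive martingale with mean one to reduce to $\E[(G_t^n-G_s^n)_+]$, then rewrite the increment multiplicatively and use an elementary inequality (you use $(z-1)_+\leq z\log_+ z$ on $z\geq1$, the paper uses $\e^x-1\leq(1\wedge|x|)\e^x$) to land on $\E[G_t^n|\log(G_t^n/G_s^n)|]$ and invoke Lemma~\ref{l:logG}. One small wording fix: the inequality $(z-1)_+\leq z\log z$ is false for $z\in(0,1)$ (the right side is negative), so state it as $(z-1)_+\leq z\log_+ z$ or restrict to $z\geq1$; your actual chain of inequalities already does this correctly.
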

\begin{proof}
  Without loss of generality, we assume $\|X\|_\infty\leq 1$.
  Fix $0\leq s\leq t\leq T$ and notice that, since $G_t^n$ is
  a martingale, $\E[G_t^n-G_s^n]=0$, hence
  \[
    \E[(G_t^n-G_s^n)\uno_{\{G_t^n\geq G_s^n\}}]
      = \E[(G_s^n-G_t^n)\uno_{\{G_s^n\geq G_t^n\}}].
  \]
  Thus
  \[
    \begin{aligned}
      |\E[(G_t^n - G_s^n)X]|
        &= |\E[(G_t^n-G_s^n)X\uno_{\{G_t^n\geq G_s^n\}}]
            + \E[(G_t^n-G_s^n)X\uno_{\{G_s^n\geq G_n^n\}}]|\\
        &\leq \E[(G_t^n-G_s^n)\uno_{\{G_t^n\geq G_s^n\}}]
           + \E[(G_s^n - G_t^n)X\uno_{\{G_s^n\geq G_t^n\}}]\\
        &= 2\E[(G_t^n-G_s^n)\uno_{\{G_t^n\geq G_s^n\}}]\\
        &= 2\E\Bigl[G_s\Bigl(\e^{\log\frac{G_t^n}{G_s^n}}-1\Bigr)
             \uno_{\{G_t^n\geq G_s^n\}}\Bigr],
    \end{aligned}
  \]
  and, by using the elementary inequality
  $\e^x-1\leq(1\wedge|x|)\e^x$, $x\in\R$,
  \[
    \begin{aligned}
      |\E[(G_t^n - G_s^n)X]|
        &\leq 2\E\Bigl[G_s\Bigl(\e^{\log\frac{G_t^n}{G_s^n}}-1\Bigr)
             \uno_{\{G_t^n\geq G_s^n\}}\Bigr]\\
        &\leq 2\E\Bigl[G_s\Bigl(1\wedge\log\frac{G_t^n}{G_s^n}\Bigr)
             \frac{G_t^n}{G_s^n} \uno_{\{G_t^n\geq G_s^n\}}\Bigr]\\
        &\leq 2\E\Bigl[G_t^n\Bigl(1\wedge\Bigl|\log\frac{G_t^n}{G_s^n}\Bigr|\Bigr)\Bigr]\\
        &\leq 2\E\Bigl[G_t^n\Bigl|\log\frac{G_t^n}{G_s^n}\Bigr|\Bigr].
    \end{aligned}
  \]
  Finally, the conclusion of the lemma follows by
  Lemma~\ref{l:logG}.
\end{proof}
\subsection{Proof of Proposition~\ref{p:elleuno}}

We recall an elementary inequality, its proof is
straightforward calculus: for every $x,y\geq0$
and $\epsilon>0$,
\begin{equation}\label{e:elementary}
  xy
    \leq\epsilon\e^{\frac{y}{\epsilon}} + \epsilon x\log x.
\end{equation}
\begin{lemma}\label{l:tobm}
  For every $\epsilon>0$, every $s,t\in[0,T]$, every $n\geq1$
  and every bounded measurable $\phi:F\to\R$,
  \[
    |\E[G_s^n\bigl(\phi(\pi_Fv^n(t)) - \phi(\pi_F v(t))\bigr)]|
      \leq \epsilon\|\phi\|_\infty\bigl(
        \mcref{cc:logG}\sqrt{T}(1+\|u(0)\|_H^2)^2
        + \e^{\frac2\epsilon}\Prob[\tau_n(v)<t]\bigr).
  \]
\end{lemma}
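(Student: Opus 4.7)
The plan is to reduce the estimate to a bound on $\E[G_s^n\uno_A]$, where $A=\{\tau_n(v)<t\}$, and then to trade that expectation for the relative entropy $\E[G_s^n\log G_s^n]$ using the elementary inequality~\eqref{e:elementary}. The entropy itself is already essentially controlled by Lemma~\ref{l:logG} applied at initial time $0$, where $G_0^n=1$.

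The first step exploits the property recalled in Section~\ref{s:girsanov} that, by construction, $v^n(t)=v(t)$ on the event $\{\tau_n(v)\geq t\}$. Hence the integrand vanishes off $A$, so that
\[
  \bigl|\E[G_s^n(\phi(\pi_Fv^n(t))-\phi(\pi_Fv(t)))]\bigr|
    = \bigl|\E[G_s^n(\phi(\pi_Fv^n(t))-\phi(\pi_Fv(t)))\uno_A]\bigr|
    \leq 2\|\phi\|_\infty\,\E[G_s^n\uno_A].
\]

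For the core estimate I would apply~\eqref{e:elementary} with $x=G_s^n$ and $y=2\uno_A$, using the bound $\e^{2\uno_A/\epsilon}\leq \e^{2/\epsilon}\uno_A+\uno_{A^c}$ to obtain, after taking expectations,
\[
  2\,\E[G_s^n\uno_A]
    \leq \epsilon\e^{2/\epsilon}\Prob[A]+\epsilon+\epsilon\,\E[G_s^n\log G_s^n].
\]
Since $G_0^n\equiv 1$ and $\E[G_s^n\log G_s^n]\leq\E[G_s^n|\log G_s^n|]$, Lemma~\ref{l:logG} with $s=0$ and $t=s$ gives
\[
  \E[G_s^n\log G_s^n]\leq\mcref{cc:logG}\sqrt{T}(1+\|u(0)\|_H^2)^2,
\]
and plugging this into the previous display, multiplying by $\|\phi\|_\infty$, and absorbing the additive $\epsilon$ into the constant (enlarging $\mcref{cc:logG}$ so that $\mcref{cc:logG}\sqrt{T}(1+\|u(0)\|_H^2)^2\geq1$) yields the lemma. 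The only subtle bookkeeping point I would double-check is the choice of scaling $y=2\uno_A$, since it is exactly the factor $2$ on this side that delivers the advertised exponent $\e^{2/\epsilon}$; apart from that the argument is a short combination of the two ingredients already in hand.
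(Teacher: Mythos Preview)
Your proof is correct and follows essentially the same route as the paper: reduce to the event $A=\{\tau_n(v)<t\}$ using $v^n=v$ on $A^c$, then apply the Young-type inequality~\eqref{e:elementary} to split into an entropy term (controlled by Lemma~\ref{l:logG} with $G_0^n=1$) and a term of size $\e^{2/\epsilon}\Prob[A]$. The only difference is cosmetic: the paper applies~\eqref{e:elementary} directly to $x=G_s^n$ and $y=\phi(\pi_Fv^n(t))-\phi(\pi_Fv(t))$ \emph{after} inserting the indicator $\uno_A$, so the exponential term is already multiplied by $\uno_A$ and no stray additive $\epsilon$ appears; you instead bound by $2\|\phi\|_\infty\E[G_s^n\uno_A]$ first and then apply the inequality with $y=2\uno_A$, which costs you the extra $\epsilon$ coming from $\e^{0}=1$ on $A^c$. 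Since in every application of the lemma one sends $n\to\infty$ and then $\epsilon\downarrow 0$, this discrepancy is immaterial, though your remark about ``enlarging $\mcref{cc:logG}$'' to absorb it is the point where you are not quite proving the lemma with the exact constant displayed.
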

\begin{proof}
  Fix $\epsilon>0$ and assume for simplicity $\|\phi\|_\infty\leq 1$.
  We know that $v^n(t) = v(t)$ on $\tau_n(v)\geq t$, hence
  \[
    \E[G_s^n\bigl(\phi(\pi_Fv^n(t)) - \phi(\pi_F v(t))\bigr)]
      = \E[G_s^n\bigl(\phi(\pi_Fv^n(t)) - \phi(\pi_F v(t))\bigr)
        \uno_{\{\tau_n(v)<t\}}].
  \]
  By the inequality \eqref{e:elementary} above, applied to $x=G_s^n$
  and $y=\frac1\epsilon(\phi(\pi_Fv^n(t)) - \phi(\pi_F v(t)))$,
  \[
    \begin{multlined}[.9\linewidth]
      \E[G_s^n\bigl(\phi(\pi_Fv^n(t)) - \phi(\pi_F v(t))\bigr)
          \uno_{\{\tau_n(v)<t\}}]\leq\\
        \leq \epsilon\E[G_s^n\log G_s^n]
          + \epsilon\E[\e^{\phi(\pi_Fv^n(t)) - \phi(\pi_F v(t))}
          \uno_{\{\tau_n(v)<t\}}]\leq\\
        \leq \epsilon\E[G_s^n\log G_s^n]
          + \epsilon\e^\frac{2}{\epsilon}\Prob[\tau_n(v)<t].
    \end{multlined}
  \]
  The statement of the lemma now follows by Lemma~\ref{l:logG}.
\end{proof}
Let $U_\phi$ be the solution of the heat equation
\begin{equation}\label{e:heat}
  \partial_t U_\phi
    = \frac12\Tr(\pi_F\cov(\pi_F\cov)^\star D^2U_\phi),
\end{equation}
with initial condition $\phi$. This is well defined, smooth
and a linear transformation of the standard heat equation
due again to assumption \eqref{e:hpbesov}. 
\begin{lemma}\label{l:rep}
  For every $0\leq s\leq t\leq T$, $n\geq1$ and $\phi:F\to\R$
  bounded measurable,
  \[
    \E[G_s^n\phi(\pi_F v(t))]
      = \E[G_s^n U_\phi(t-s,\pi_F v(s))].
  \]
\end{lemma}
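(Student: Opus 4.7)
The approach is to exploit the fact that the auxiliary equation for $v$ was engineered precisely so that its drift is orthogonal to $F$. Because $F\subset H_N$, one has $\pi_F\pi_N=\pi_F$, hence $\pi_F(\pi_N-\pi_F)=0$. Applying $\pi_F$ to the SDE for $v$ stated in Section~\ref{s:girsanov} therefore annihilates the drift entirely and leaves
$$
  d(\pi_F v) = \pi_F\cov\,dW.
$$
Consequently, under the original probability $\Prob$, the $F$-valued process $\pi_F v$ is Gaussian with independent increments; in particular $\pi_F v(t)-\pi_F v(s)$ is independent of $\field_s$ and has law $N(0,(t-s)\,\pi_F\cov(\pi_F\cov)^\star)$.

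Next I would identify $U_\phi(r,\cdot)$ with the heat semigroup associated to this Gaussian law, acting on $\phi$: by the standard probabilistic representation for the Cauchy problem \eqref{e:heat} one has
$$
  U_\phi(r,x) = \E\bigl[\phi(x+Z_r)\bigr],
$$
where $Z_r$ is any $F$-valued random vector with law $N(0,r\,\pi_F\cov(\pi_F\cov)^\star)$. Setting $r=t-s$ and $x=\pi_F v(s)$, and combining with the independence statement just recorded, one obtains
$$
  \E[\phi(\pi_F v(t))\mid\field_s] = U_\phi(t-s,\pi_F v(s)).
$$

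Finally, the multiplier $G_s^n$ is $\field_s$-measurable, being the exponential of It\^o and Lebesgue integrals over $[0,s]$ whose integrands are adapted functionals of $v^n$. The tower property then yields
$$
  \E[G_s^n\phi(\pi_F v(t))]
    = \E\bigl[G_s^n\,\E[\phi(\pi_F v(t))\mid\field_s]\bigr]
    = \E[G_s^n\,U_\phi(t-s,\pi_F v(s))],
$$
which is the claim. There is no real obstacle beyond spotting the algebraic identity $\pi_F(\pi_N-\pi_F)=0$; this is precisely the structural feature that motivated the introduction of the auxiliary equation for $v$ in Section~\ref{s:girsanov}, so the lemma can be read as a consistency check for the Girsanov construction.
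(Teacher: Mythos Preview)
Your proof is correct and follows essentially the same route as the paper: the paper also observes that $\beta(t)=\pi_F v(t)$ reduces to $\pi_F u(0)+\int_0^t\pi_F\cov\,dW$ and then applies the Markov property together with the $\field_s$-measurability of $G_s^n$. Your version is slightly more explicit in justifying why the drift vanishes (via $\pi_F(\pi_N-\pi_F)=0$) and why $G_s^n$ is $\field_s$-measurable, but the argument is the same.
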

\begin{proof}
  Set $\beta(t) = \pi_Fv(t)$, then by assumption \eqref{e:hpbesov}
  $\beta(t) = \pi_Fu(0) + \int_0^t \pi_F\cov\,dW$ is
  a $d$--dimensional Brownian motion started at $\pi_Fu(0)$.
  By the Markov property,
  \[
    \E[G_s^n\phi(\pi_Fv(t))]
      = \E\bigl[G_s^n\E[\phi(\beta(t))|\field_s]\bigr]
      = \E[G_s^n U_\phi(t-s,\beta_s)].\qedhere
  \]
\end{proof}
\begin{lemma}\label{l:Phiincrement}
  There is $\mcdef{cc:heat}>0$ such that for every $0\leq s\leq t\leq T$,
  every $n\geq1$, every bounded measurable $\phi:F\to\R$, and
  every $\alpha\in(0,1)$,
  \[
    \begin{aligned}
      \E[G_s^n\bigl(\phi(\pi_Fv^n(t)) - \phi(\pi_F v^n(s))\bigr)]
        &\leq \mcref{cc:heat}\|\phi\|_\infty\bigl(
          [f(s)]_{B^\alpha_{1,\infty}}(t-s)^{\frac\alpha2}\\
        &\quad  + \epsilon\sqrt{T}(1+\|u(0)\|_H^2)^2
          + \epsilon\e^{\frac2\epsilon}\Prob[\tau_n(v)<t]\bigr).
    \end{aligned}
  \]
\end{lemma}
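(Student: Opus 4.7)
The plan is to telescope through the unstopped process $v$, convert the middle increment into a heat-semigroup increment via Lemma~\ref{l:rep}, reverse the Girsanov equivalence to pair with the density $f(s;x)$, and finally exchange the Besov regularity of $f(s)$ for H\"older continuity in time by a ``fractional integration by parts''.

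First, I would write
\[
  \phi(\pi_F v^n(t)) - \phi(\pi_F v^n(s))
    = \bigl[\phi(\pi_F v^n(t)) - \phi(\pi_F v(t))\bigr]
      + \bigl[\phi(\pi_F v(t)) - \phi(\pi_F v(s))\bigr]
      + \bigl[\phi(\pi_F v(s)) - \phi(\pi_F v^n(s))\bigr],
\]
multiply by $G_s^n$ and take expectations. The first and third contributions are bounded by Lemma~\ref{l:tobm} (the third at time $s$; since $\Prob[\tau_n(v)<s]\leq\Prob[\tau_n(v)<t]$, both fit in the stated error). For the middle contribution, Lemma~\ref{l:rep} together with $U_\phi(0,\cdot)=\phi$ gives
\[
  \E[G_s^n(\phi(\pi_F v(t)) - \phi(\pi_F v(s)))]
    = \E[G_s^n(U_\phi(t-s,\pi_F v(s)) - U_\phi(0,\pi_F v(s)))],
\]
and replacing $v(s)$ by $v^n(s)$ in both arguments produces two further Lemma~\ref{l:tobm}-type errors, valid because the maximum principle for \eqref{e:heat} (or the probabilistic representation below) gives $\|U_\phi(t-s,\cdot)\|_\infty\leq\|\phi\|_\infty$.

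Second, I would apply Girsanov in reverse. For any bounded measurable $H:F\to\R$, the martingale property of $G^n$ and the equivalence from Section~\ref{s:girsanov} yield
\[
  \E[G_s^n H(\pi_F v^n(s))]
    = \E[G_T^n H(\pi_F v^n(s))]
    = \E[H(\pi_F u(s))]
    = \int_F H(y)\,f(s;x)(y)\,dy.
\]
Applied with $H = U_\phi(t-s,\cdot) - \phi$, and using $U_\phi(t-s,y) = \E[\phi(y+Z_{t-s})]$ with $Z_{t-s}$ a centred $F$-valued Gaussian of covariance $(t-s)(\pi_F\cov)(\pi_F\cov)^\star$, Fubini and the change of variable $y\mapsto y+Z_{t-s}$ give
\[
  \int_F \bigl(U_\phi(t-s,y)-\phi(y)\bigr)\,f(s;x)(y)\,dy
    = \E\Bigl[\int_F \phi(y)\bigl(f(s;x)(y-Z_{t-s}) - f(s;x)(y)\bigr)\,dy\Bigr].
\]
Bounding $\phi$ by $\|\phi\|_\infty$, splitting the outer expectation on $\{|Z_{t-s}|\leq 1\}$ and using $\|\Delta_{-Z_{t-s}}^1 f(s)\|_{L^1} \leq [f(s)]_{B^\alpha_{1,\infty}} |Z_{t-s}|^\alpha$ there (first differences suffice because $\alpha\in(0,1)$), while controlling the complement by the trivial $\|\Delta_h^1 f(s)\|_{L^1}\leq 2$ combined with the Gaussian tail $\Prob[|Z_{t-s}|>1]\lesssim\exp(-c/(t-s))$, yields the main term $c\|\phi\|_\infty[f(s)]_{B^\alpha_{1,\infty}}(t-s)^{\alpha/2}$.

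The essential step is this last ``fractional integration by parts'': transferring the heat translation from $\phi$ onto $f(s)$ is precisely what trades the Besov regularity of the density for H\"older continuity in time, and is the mechanism that makes the whole method work. Everything else is bookkeeping that assembles the various Lemma~\ref{l:tobm}-type errors on top of Lemma~\ref{l:rep} and the Girsanov equivalence from Section~\ref{s:girsanov}.
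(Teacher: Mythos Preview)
Your proposal is correct and follows essentially the same route as the paper: pass from $v^n$ to $v$ via Lemma~\ref{l:tobm}, invoke Lemma~\ref{l:rep} to reduce to the heat-semigroup increment, reverse Girsanov to pair with the density $f(s)$, and then shift the Gaussian translation onto $f(s)$ to extract the $B^\alpha_{1,\infty}$ seminorm. The only cosmetic difference is that the paper splits directly at $U_\phi(t-s,\pi_F v^n(s))$, which avoids the redundant pair of Lemma~\ref{l:tobm} errors for $\phi$ at time $s$ that your outer telescoping introduces and then undoes; this is harmless bookkeeping.
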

\begin{proof}
  Let $s,t,n,\phi$ as in the statement of the lemma and assume
  for simplicity $\|\phi\|_\infty\leq 1$. We have
  \[
    \begin{aligned}
    \E[G_s^n\bigl(\phi(\pi_Fv^n(t)) - \phi(\pi_F v^n(s))\bigr)]
      &= \underbrace{\E[G_s^n\bigl(\phi(\pi_Fv^n(t)) - 
          U_\phi(t-s,\pi_F v^n(s))\bigr)]}_{\memo{\Tiny a}}\\
      &\quad + \underbrace{\E[G_s^n\bigl(U_\phi(t-s,\pi_Fv^n(s))
          - \phi(\pi_F v^n(s))\bigr)]}_{\memo{\Tiny b}}.
    \end{aligned}
  \]
  For the first term we use Lemma~\ref{l:rep},
  Lemma~\ref{l:tobm} twice, and
  $\|U_\phi\|_\infty\leq\|\phi\|_\infty$,
  \[
    \begin{aligned}
    \memo{a}
      &= \E[G_s^n\bigl(\phi(\pi_Fv^n(t)) - \phi(\pi_F v(t))\bigr)]
        + \E[G_s^n\bigl(\phi(\pi_Fv(t)) - U_\phi(t-s,\pi_F v(s))\bigr)]\\
      &\quad  + \E[G_s^n\bigl(U_\phi(t-s,\pi_Fv(s)) - U_\phi(t-s,\pi_F v^n(s))\bigr)]\\
      &\leq 2\epsilon\bigl(\mcref{cc:logG}\sqrt{T}(1+\|u(0)\|_H^2)^2
        + \e^{\frac2\epsilon}\Prob[\tau_n(v)<t]\bigr).
    \end{aligned}
  \]
  For the second term, we change back the probability measure, since
  on the interval $[0,s]$ $u$ under $\Prob$ has the same law of $v^n$
  under $G^n_s\Prob$,
  \[
    \begin{aligned}
      \memo{b}
        &= \E[\bigl(U_\phi(t-s,\pi_Fu(s)) - \phi(\pi_F u(s))\bigr)]\\
        &= \int_{\R^d}(U_\phi(t-s,y) - \phi(y))f(s,y)\,dy\\
        &= \int_{\R^d}(\hat\E[\phi(y+\hat B_{t-s})] - \phi(y))f(s,y)\,dy\\
        &= \hat\E\Bigl[\int_{\R^d}\phi(y)(f(s,y-\hat B_{t-s})-f(s,y))\,dy\Bigr]\\
        &\leq \hat\E[\|f(s,\cdot-\hat B_{t-s})-f(s,\cdot)\|_{L^1(\R^d)}]\\
        &\leq [f(s)]_{B^\alpha_{1,\infty}}\hat\E[|\hat B_{t-s}|^\alpha]\\
        &\leq\mconst[f(s)]_{B^\alpha_{1,\infty}}(t-s)^{\frac\alpha2},
    \end{aligned}
  \]
  where $\alpha\in(0,1)$, $f(t,\cdot)$ (or more precisely $f_N(t,\cdot)$,
  but again we drop the superscript for simplicity) is the density
  of $\pi_Fu(t)$, and where $(\hat B_t)_{t\geq0}$ is an independent
  $F$--valued Brownian motion with (spatial) covariance
  $\pi_F\cov(\pi_F\cov)^\star$ introduced to represent the solutions
  of \eqref{e:heat}.
\end{proof}
We finally have all the ingredients to complete the proof of
Proposition~\ref{p:elleuno}.
\begin{proof}[Proof of Proposition~\ref{p:elleuno}]
  Let $0\leq s\leq t$.
  By duality, it sufficient to estimate the following quantity
  for each bounded measurable $\phi:F\to\R$ with $\|\phi\|_\infty\leq1$.
  For every $n\geq1$, by the Girsanov transformation detailed
  in Section~\ref{s:girsanov},
  \[
    \begin{aligned}
      \int_F \phi(y)(f(t,y) - f(s,y))\,dy
        &= \E[\phi(\pi_F u(t)) - \phi(\pi_F u(s))]\\
        &= \E[G_t^n\bigl(\phi(\pi_F v^n(t)) - \phi(\pi_F v^n(s))\bigr)]\\
        &= \E[G_t^n\phi(\pi_F v^n(t)) - G_s^n\phi(\pi_F v^n(s))]\\
        &= \underbrace{\E[(G_t^n-G_s^n)\phi(\pi_F v^n(t))]}_{\memo{\Tiny 1}}\\
        &\quad + \underbrace{\E[G_s^n\bigl(\phi(\pi_F v^n(t)) - \phi(\pi_F v^n(s))\bigr)]}_{\memo{\Tiny 2}}.
    \end{aligned}
  \]
  The first term is estimated through Lemma~\ref{l:Gincrement},
  \[
    \memo{1}
      \leq \mcref{cc:girsanov}(1 + \|x\|_H^2)^2(t-s)^\frac12,
  \]
  the second term through Lemma~\ref{l:Phiincrement}, for every $\epsilon>0$,
  \[
    \memo{2}
      \leq \mcref{cc:heat}\bigl([f(s)]_{B^\alpha_{1,\infty}}(t-s)^{\frac\alpha2}
        + \epsilon\sqrt{t}(1+\|x\|_H^2)^2
        + \epsilon\e^{\frac2\epsilon}\Prob[\tau_n(v)<t]\bigr),
  \]
  so that in conclusion
  \[
    \begin{multlined}[.9\linewidth]
    \Bigl|\int_F \phi(y)(f(t,y) - f(s,y))\,dy\Bigr|
      \leq \mcref{cc:girsanov}(1 + \|x\|_H^2)^2(t-s)^\frac12 + {}\\
        + \mcref{cc:heat}\bigl([f(s)]_{B^\alpha_{1,\infty}}(t-s)^{\frac\alpha2}
        + \epsilon\sqrt{t}(1+\|x\|_H^2)^2
        + \epsilon\e^{\frac2\epsilon}\Prob[\tau_n(v)<t]\bigr),
    \end{multlined}
  \]
   and by taking first the limit as $n\uparrow\infty$, so that
   $\Prob[\tau_n(v)<t]\downarrow0$, and then as $\epsilon\downarrow0$,
   the statement of the proposition follows.
\end{proof}
\section{The estimate in the Besov seminorm}

In this section we prove Theorem~\ref{t:main2}. To this end
we use together the machinery on Girsanov's theorem introduced
in the previous section and the technique based on Besov spaces
introduced in \cite{DebRom2014}.
\subsection{A smoothing lemma}

The technique introduced in \cite{DebRom2014} is based
on a duality estimate that provides a quantitative integration
by parts. Since we are dealing with regularity properties of
low order, we will use Besov spaces to measure it.
The following lemma is implicitly given in \cite{DebRom2014},
we state it here explicitly and give a complete proof.
\begin{lemma}[smoothing lemma]\label{l:smoothing}
  If $\mu$ is a finite measure
  on $\R^d$ and there are an integer $m\geq1$,
  two real numbers $s>0$, $\gamma\in(0,1)$,
  with $\gamma<s<m$, and a constant $K>0$
  such that for every $\phi\in C^\gamma_b(\R^d)$ and
  $h\in\R^d$,
  \[
    \Bigl|\int_{\R^d} \Delta_h^m\phi(x)\,\mu(dx)\Bigr|
      \leq K|h|^s\|\phi\|_{C_b^\gamma},
  \]
  then $\mu$ has a density $f_\mu$ with respect to the
  Lebesgue measure on $\R^d$. Moreover, for every $r<s-\gamma$
  there exists $\mcdef{cc:smoothing}>0$ such that
  \begin{equation}\label{e:smoothing}
    \|f_\mu\|_{B^r_{1,\infty}}
      \leq\mcref{cc:smoothing}(\mu(\R^d) + K).
  \end{equation}
\end{lemma}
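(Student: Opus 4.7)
The plan is to estimate the Littlewood--Paley projections of $\mu$ by a duality argument that exploits the fact that these projections can be written as $m$-th finite differences, so the hypothesis applies directly. Fix a Schwartz function $\chi$ and set $\phi := \Delta^m_{e_1}\chi$, which has $m$ vanishing moments by construction. Its $L^1$-normalized dyadic dilates $\phi_j(x) := 2^{jd}\phi(2^jx)$ satisfy the rescaling identity
\[
  \phi_j = \Delta^m_{h_j}\chi_j,\qquad h_j := 2^{-j}e_1,\quad \chi_j(x) := 2^{jd}\chi(2^jx),
\]
with $\|\chi_j\|_{L^1}$ uniformly bounded and $\|\nabla\chi_j\|_{L^1}\leq C 2^j$. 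By the $\varphi$-transform characterization of Besov spaces \cite{Tri1983}, for suitable choices of $\chi$ and of a fixed low-pass $L^1$ kernel $P_1$,
\[
  \|f\|_{B^r_{1,\infty}} \sim \|P_1*f\|_{L^1} + \sup_{j\geq 0}\, 2^{jr}\|\phi_j*f\|_{L^1}.
\]

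For any $\psi\in L^\infty(\R^d)$ with $\|\psi\|_\infty\leq 1$, convolution and the finite difference operator commute, so
\[
  \int \psi(\phi_j*\mu)\,dx = \int \Delta^m_{-h_j}(\psi*\tilde\chi_j)\,d\mu,\qquad \tilde\chi_j(z):=\chi_j(-z).
\]
By $C^0$--$C^1$ interpolation, $\|\psi*\tilde\chi_j\|_{C^\gamma_b}\leq C\,2^{j\gamma}$; applying the hypothesis with test function $\psi*\tilde\chi_j$ and step $h_j$ (so $|h_j|=2^{-j}$) gives
\[
  \Bigl|\int\psi(\phi_j*\mu)\,dx\Bigr|\leq K|h_j|^s\,\|\psi*\tilde\chi_j\|_{C^\gamma_b}\leq CK\,2^{-j(s-\gamma)}.
\]
Taking the supremum over $\psi$ with $\|\psi\|_\infty\leq 1$ yields $\|\phi_j*\mu\|_{L^1}\leq CK\,2^{-j(s-\gamma)}$, while $\|P_1*\mu\|_{L^1}\leq C\mu(\R^d)$ holds trivially because $P_1$ is an $L^1$ convolver.

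For any $r<s-\gamma$ one has $\sup_j 2^{jr}\|\phi_j*\mu\|_{L^1}\leq CK$, whence $\|\mu\|_{B^r_{1,\infty}}\leq C(\mu(\R^d)+K)$ via the displayed equivalence. Since $r>0$, the space $B^r_{1,\infty}$ embeds into $L^1$, forcing $\mu$ to admit an $L^1$ density $f_\mu$ with the claimed Besov bound. The main technical obstacle is the first displayed equivalence: one needs the $\phi_j$'s, which here are of the very specific algebraic form $\Delta^m_{h_j}\chi_j$ rather than generic Fourier-truncated Littlewood--Paley blocks, to still provide a reproducing/$\varphi$-transform decomposition (modulo $P_1$) that characterizes $B^r_{1,\infty}$ up to norm equivalence. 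This is standard from the atomic/molecular theory of Besov spaces for atoms with prescribed vanishing moments; once it is in place, the rest of the argument is routine duality, commutation of convolution with $\Delta^m_h$, and scaling.
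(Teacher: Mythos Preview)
Your duality computation and the idea of matching the mollification scale to the difference step are sound, but the claimed norm equivalence fails as written. The kernel $\phi = \Delta^m_{e_1}\chi$ has Fourier transform $\hat\phi(\xi) = (\e^{2\pi i\xi_1}-1)^m\hat\chi(\xi)$, which vanishes on the entire hyperplane $\{\xi_1=0\}$, so the Tauberian (non--degeneracy) condition required for any local--means or $\varphi$--transform characterization of the isotropic Besov spaces is violated. Concretely, a function whose Fourier support lies in $\{\xi_1=0\}$ at high frequency satisfies $\phi_j * f \equiv 0$ for all $j$, yet need not lie in $B^r_{1,\infty}$. Vanishing moments are necessary but not sufficient; atomic/molecular theory supplies the \emph{decomposition} direction, not the reconstruction direction you need. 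This is exactly the obstacle you flagged, and it is not resolved by the reference you cite.

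The gap is repairable because the hypothesis is available for \emph{every} $h\in\R^d$, not only $h\parallel e_1$; one can run your bound for all $|h|\le 2^{-j}$ and appeal to a ball--means--of--differences characterization, though passing from estimates on $\Delta^m_h(\chi_j*\mu)$ to estimates on $\Delta^m_h f_\mu$ still needs an extra step. The paper avoids the issue by a different mechanism: it mollifies $\mu$ to $f_\epsilon = \varphi_\epsilon*\mu$ at a \emph{fixed} scale, transfers the hypothesis, and applies the Bessel lift $g_\epsilon = (I-\Delta_d)^{-\beta/2}f_\epsilon$ with $\beta>\gamma$. Since $(I-\Delta_d)^{-\beta/2}$ maps $B^{\gamma-\beta}_{\infty,\infty}$ to $C^\gamma_b$ and $L^\infty\hookrightarrow B^{\gamma-\beta}_{\infty,\infty}$, the estimate against $C^\gamma_b$ test functions becomes one against $L^\infty$ test functions, and plain $L^1$--$L^\infty$ duality gives $\|\Delta^m_h g_\epsilon\|_{L^1}\leq CK|h|^s$ directly. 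Hence $g_\epsilon\in B^s_{1,\infty}$ and $f_\epsilon\in B^{s-\beta}_{1,\infty}$ uniformly in $\epsilon$, with no Littlewood--Paley or Tauberian input; the arbitrarily small loss $\beta-\gamma$ is precisely the $r<s-\gamma$ appearing in the statement.
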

\begin{proof}
  Fix a smooth function $\phi$.
  Let $(\varphi_\epsilon)_{\epsilon>0}$ be a smoothing kernel,
  namely $\varphi_\epsilon = \epsilon^{-d}\varphi(x/\epsilon)$,
  with $\varphi\in C^\infty_c(\R^d)$, $0\leq\varphi\leq1$, and
  $\int_{\R^d}\varphi(x)\,dx = 1$. Let $f_\epsilon=\varphi_\epsilon
  \star\mu$, then easy computations show that $f_\epsilon\geq0$,
  $\int_{\R^d}f_\epsilon(x)\,dx =\mu(\R^d)$ and that
  \[
    \Bigl|\int_{\R^d}\Delta_h^m\phi(x) f_\epsilon(x)\,dx\Bigr|
      = \Bigl|\int\varphi_\epsilon(x)\Bigl(
        \int_{\R^d}\Delta_h^m\phi(x-y)\,\mu(dy)\Bigl)\,dx\Bigr|
      \leq K|h|^s\|\phi\|_{C^\gamma_b}.
  \]
  On the other hand, by a discrete integration by parts,
  \begin{equation}\label{e:dibp}
    \int_{\R^d}\Delta_h^m\phi(x) f_\epsilon(x)\,dx
      = \int_{\R^d}\Delta_{-h}^mf_\epsilon(x) \phi(x)\,dx.
  \end{equation}
  Set $g_\epsilon=(I-\Delta_d)^{-\beta/2}f_\epsilon$,
  and $\psi = (I-\Delta_d)^{\beta/2}\phi$, where $\Delta_d$
  is the $d$--dimensional Laplace operator and $\beta>\gamma$.
  We have by \cite[Theorem 10.1]{AroSmi1961} that
  $\|g_\epsilon\|_{L^1}\leq\mconst\|f_\epsilon\|_{L^1}$. Moreover,
  by \cite[Theorem 2.5.7,Remark 2.2.2/3]{Tri1983}),
  we know that
  $C^\gamma_b(\R^d) = B^\gamma_{\infty,\infty}(\R^d)$,
  and by \cite[Theorem 2.3.8]{Tri1983} we know that
  $(I - \Delta_d)^{-\beta/2}$ is a continuous operator
  from $B_{\infty,\infty}^{\gamma-\beta}(\R^d)$ to
  $B_{\infty,\infty}^\gamma(\R^d)$.
  Hence, by \eqref{e:dibp} it follows that
  \[
    \int_{\R^d}\Delta_h^mg_\epsilon(x)\psi(x)\,dx
      = \int_{\R^d}\Delta_h^mf_\epsilon(x)\phi(x)\,dx
      \leq K|h|^s\|\phi\|_{C^\gamma_b}
      \leq\mcdef{cc:smoothing2}K|h|^s\|\psi\|_{B^{\gamma-\beta}_{\infty,\infty}}
  \]
  Notice that by \cite[Theorem 2.11.2]{Tri1983},
  $B_{\infty,\infty}^{\gamma-\beta}(\R^d)$ is the dual
  of $B_{1,1}^{\beta-\gamma}(\R^d)$, moreover
  $B_{1,1}^{\beta-\gamma}(\R^d)\hookrightarrow L^1(\R^d)$ by definition,
  since $\beta>\gamma$, therefore
  $L^\infty(\R^d)\hookrightarrow B^{\gamma-\beta}_{\infty,\infty}$.
  By duality, $\|\Delta_h^mg_\epsilon\|_{L^1}\leq\mcref{cc:smoothing2}K|h|^s$,
  hence $\|g_\epsilon\|_{B^s_{1,\infty}}\leq\mconst(K+\mu(\R^d))$.
  Again since $(I - \Delta_d)^{\beta/2}$ maps continuously
  $B_{\infty,\infty}^s(\R^d)$ into
  $B_{\infty,\infty}^{s-\beta}(\R^d)$, it finally follows
  that $\|f_\epsilon\|_{B^{s-\beta}_{1,\infty}}\leq\mconst\|g_\epsilon\|_{B^s_{1,\infty}}$
  for every $\beta>\gamma$.

  By Sobolev's embeddings and \cite[formula 2.2.2/(18)]{Tri1983},
  we have for every $r<s-\beta$ and $1\leq p\leq d/(d-r)$ that
  $B_{\smash{1,\infty}}^{s-\beta}(\R^d)\hookrightarrow
  B_{1,1}^r(\R^d) = W^{r,1}(\R^d)\subset L^p(\R^d)$.
  In particular, $(f_\epsilon)_{\epsilon>0}$ is uniformly integrable
  in $L^1(\R^d)$, therefore there is $f_\mu$ such that $\mu = f_\mu\,dx$
  and $(f_\epsilon)_{\epsilon>0}$ converges weakly in $L^1(\R^d)$
  to $f_\mu$. By semi--continuity, \eqref{e:smoothing} holds for
  every $r<s-\gamma$.
\end{proof}
\subsection{The Besov estimate}

Let $x\in H$ and consider a solution $u$ of \eqref{e:nseabs}
that is a limit point of Galerkin approximations. All our estimates
will pass to the limit and so it is not restrictive to work
on the solution $u^N$ of \eqref{e:galerkin} with initial condition
$u^N(0) = \pi_N x$.

Given $t>0$ and $\epsilon\in(0,t)$, let
$\chi_{t,\epsilon}=\uno_{[0,t-\epsilon]}$ be the indicator
function of the interval $[0,t-\epsilon]$, and let
$u_\epsilon^N$ be the solution of
\begin{equation}\label{e:besovkill}
  d u_\epsilon^N
      + (\pi_N - \pi_F)\bigl(\nu A u_\epsilon^N + B(u_\epsilon^N)\bigr)\,dt
      + \chi_{t,\epsilon}\pi_F B(u_\epsilon^N)\,dt
    = \pi_N \cov\,dW,
\end{equation}
that is $u_\epsilon^N = u^N$ up to time $t-\epsilon$,
and $\tilde u=\pi_F u_\epsilon^N$ satisfies
for $r\in [t-\epsilon,t]$, 
\[
  \tilde u(r)
    = \pi_Fu^N(t-\epsilon)
      + \pi_F\cov (W_r-W_{t-\epsilon}).
\]
Due to assumption \eqref{e:hpbesov}, $\tilde u(r)$ is
a $d$-dimensional Brownian motion (where $d$ is the
dimension of $F$) with spatial covariance matrix
$\pi_F\cov\cov^\star\pi_F$.
\begin{proposition}\label{p:besov}
  Let $F$ be a finite dimensional subspace of 
  $D(A)$ generated by a finite set of eigenvalues
  of the Stokes operator, and assume \eqref{e:hpgirsanov2}.

  Given $\alpha,\beta\in(0,1)$ with
  $\alpha+\beta<1$, there is $\mcdef{cc:besov}>0$
  such that if $x\in H$, if $N$ is large enough
  (that $F\subset H_N$) and $u^N$ is a weak solution of
  \eqref{e:galerkin} with initial condition $\pi_Nx$,
  if $f_N(\cdot;x)$ is the density with respect to the
  Lebesgue measure on $F$ of the random variable
  $\pi_F u^N(\cdot)$, then
  \[
    \|f_N(t) - f_N(s)\|_{B^{\alpha}_{1,\infty}}
      \leq\mcref{cc:besov}|t-s|^{\frac\beta2},
  \]
  for every $s,t>0$, where
  \[
    \mcref{cc:besov}
      \approx (1+s\vee t)^{\frac{1-\beta}2}(1+\|x\|_H^2)^3
        \bigl([f_N(t)]_{B^{1-\delta}_{1,\infty}}
         + [f_N(s)]_{B^{1-\delta}_{1,\infty}}\bigr),
  \]
  and $\delta<1-(\alpha+\beta)$.
\end{proposition}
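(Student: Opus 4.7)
The plan is to reduce Proposition~\ref{p:besov} via Lemma~\ref{l:smoothing} to a duality estimate of the form
\[
  \Bigl|\int_F \Delta_h^m\phi(y)\bigl(f_N(t,y)-f_N(s,y)\bigr)\,dy\Bigr|
    \leq K\,|h|^{s_1}\,\|\phi\|_{C^\gamma_b},
\]
with $s_1-\gamma>\alpha$ and $K\lesssim|t-s|^{\beta/2}$; the signed measure $(f_N(t,\cdot)-f_N(s,\cdot))\,dy$ has zero total mass, so the $\mu(\R^d)$ term in Lemma~\ref{l:smoothing} drops out. The duality integral equals $\E[\Delta_h^m\phi(\pi_F u(t))-\Delta_h^m\phi(\pi_F u(s))]$, and inserting the Girsanov densities of Section~\ref{s:girsanov} decomposes it as $\mathrm{(I)}+\mathrm{(II)}$, with $\mathrm{(I)}=\E[(G_t^n-G_s^n)\Delta_h^m\phi(\pi_F v^n(t))]$ and $\mathrm{(II)}=\E[G_s^n(\Delta_h^m\phi(\pi_F v^n(t))-\Delta_h^m\phi(\pi_F v^n(s)))]$, exactly as in the proof of Proposition~\ref{p:elleuno}.

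Term $\mathrm{(II)}$ is treated as in Lemma~\ref{l:Phiincrement} but with $\phi$ replaced by $\Delta_h^m\phi$: the identity $U_{\Delta_h^m\phi}=\Delta_h^m U_\phi$ together with a discrete integration by parts converts the main piece into $\int_F(U_\phi(t-s,y)-\phi(y))\Delta_{-h}^m f_N(s,y)\,dy$. Combining $|U_\phi(r,y)-\phi(y)|\leq C\|\phi\|_{C^\gamma_b} r^{\gamma/2}$ with the Besov regularity of $f_N(s)$ from \cite{DebRom2014} bounds this by $C\|\phi\|_{C^\gamma_b}(t-s)^{\gamma/2}[f_N(s)]_{B^{1-\delta}_{1,\infty}}|h|^{1-\delta}$. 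Taking $\gamma=\beta$ gives the right time factor and leaves smoothing exponent $s_1-\gamma=1-\delta-\beta$, which exceeds $\alpha$ exactly when $\delta<1-(\alpha+\beta)$, matching the hypothesis.

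Term $\mathrm{(I)}$ is the main obstacle. The naive use of Lemma~\ref{l:Gincrement} would yield only $|h|^\gamma|t-s|^{1/2}$, useless once one divides by $|h|^\alpha$. Instead I exploit that $\pi_F v^n$ is a Brownian motion with covariance $\pi_F\cov\cov^\star\pi_F$ under $\Prob$, and that $v^n$ has the law of $u$ on $[0,t]$ (resp.\ $[0,s]$) under $G_t^n\Prob$ (resp.\ $G_s^n\Prob$); conditioning on $\field_s$ inside the second summand and applying both changes of measure yields, modulo stopping-time remainders that vanish as $n\to\infty$,
\[
  \mathrm{(I)}
    = \int_F\phi(y)\,\Delta_{-h}^m f_N(t,y)\,dy
      - \int_F U_\phi(t-s,y)\,\Delta_{-h}^m f_N(s,y)\,dy.
\]
Adding and subtracting $\int_F U_\phi(t-s,\cdot)\Delta_{-h}^m f_N(t)$ splits this into a ``heat-regularization'' piece, identical in form to the main contribution of $\mathrm{(II)}$ but with $f_N(t)$ replacing $f_N(s)$, and a ``density-increment'' piece $\int_F\Delta_h^m U_\phi(t-s,y)(f_N(t,y)-f_N(s,y))\,dy$.

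The density-increment piece is the delicate one. I control it by $\|\Delta_h^m U_\phi(t-s,\cdot)\|_\infty\,\|f_N(t)-f_N(s)\|_{L^1}$, combining the standard heat-kernel interpolation $\|\Delta_h^m U_\phi(r,\cdot)\|_\infty\leq C|h|^{s_1}r^{-(s_1-\gamma)/2}\|\phi\|_{C^\gamma_b}$ (valid for $s_1\in[\gamma,m]$) with the $L^1$-in-time bound of Proposition~\ref{p:elleuno}, $\|f_N(t)-f_N(s)\|_{L^1}\lesssim|t-s|^{\alpha''/2}$, for any $\alpha''\in(0,1)$. Their product equals $|h|^{s_1}(t-s)^{(\alpha''+\gamma-s_1)/2}\|\phi\|_{C^\gamma_b}$; setting $s_1=\alpha''+\gamma-\beta$ produces exactly the required $|t-s|^{\beta/2}$ and leaves effective smoothing exponent $s_1-\gamma=\alpha''-\beta$. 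This exceeds $\alpha$ precisely when $\alpha''\in(\alpha+\beta,1)$, an interval that is nonempty exactly under the hypothesis $\alpha+\beta<1$. Assembling all pieces, sending $n\to\infty$ to drop the stopping-time tails, and invoking Lemma~\ref{l:smoothing} yields the proposition, with both $[f_N(t)]_{B^{1-\delta}_{1,\infty}}$ and $[f_N(s)]_{B^{1-\delta}_{1,\infty}}$ arising naturally in the constant from the two heat-regularization contributions.
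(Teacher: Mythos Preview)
Your argument is correct and takes a genuinely different route from the paper. The paper does \emph{not} reuse the decomposition $\mathrm{(I)}+\mathrm{(II)}$ from Proposition~\ref{p:elleuno}; instead it introduces the auxiliary process $u_\epsilon$ of \eqref{e:besovkill}, conditions at time $t-\epsilon$, splits into a Brownian ``\textsf{prob}'' term (Lemma~\ref{l:brownian}) and a Girsanov ``\textsf{num}'' term (Lemma~\ref{l:numgirsanov}), separates the regimes $t-s\le|h|^2$ versus $t-s\ge|h|^2$, and optimises over the free parameter $\epsilon$. Proposition~\ref{p:elleuno} enters only at the very end to supply the $L^1$ part of the Besov norm. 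Your route is more economical: it recycles the machinery of Section~\ref{s:girsanov} and Lemma~\ref{l:Phiincrement} wholesale, avoids the process $u_\epsilon$ and the case split, and replaces the $\epsilon$--optimisation by the clean bootstrap step that feeds the $L^1$ estimate of Proposition~\ref{p:elleuno} into the ``density-increment'' piece via the heat-kernel interpolation $\|\Delta_h^m U_\phi(r,\cdot)\|_\infty\lesssim|h|^{s_1}r^{-(s_1-\gamma)/2}\|\phi\|_{C^\gamma_b}$. The constants you obtain are comparable (your $\|x\|_H$--dependence is even slightly better).

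One small correction: the claim that ``the $\mu(\R^d)$ term in Lemma~\ref{l:smoothing} drops out'' because the signed measure has zero total mass is not quite right. For signed measures the proof of Lemma~\ref{l:smoothing} needs the total variation $|\mu|(\R^d)=\|f_N(t)-f_N(s)\|_{L^1}$ in place of $\mu(\R^d)$, since the $L^1$ control of $f_\epsilon$ (and hence of $g_\epsilon$) is what pairs with the seminorm estimate to give a full Besov-norm bound. This is not zero, but it is of order $|t-s|^{\beta/2}$ by Proposition~\ref{p:elleuno} (with a suitable exponent), so it fits into your constant $K$ without further work. The paper makes exactly this move in its final line.
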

The following lemma summarizes the result of \cite{DebRom2014},
adding the explicit dependence of the Besov norm of the
density in terms of time, which is needed for the
evaluation of the inequality in the previous proposition.
\begin{lemma}\label{l:timedep}
  Let $F$ be a finite dimensional subspace of 
  $D(A)$ generated by a finite set of eigenvalues
  of the Stokes operator, and assume \eqref{e:hpbesov}.
  For every $t>0$ and $x\in H$, the projection $\pi_F u(t)$
  has a density $f_F(t)$ with respect to the Lebesgue
  measure on $F$, where $u$ is any solution of \eqref{e:nseabs},
  with initial condition $x$, which is a limit point of
  the spectral Galerkin approximations.

  Moreover, for every $\alpha\in(0,1)$,
  $f_F(t)\in B_{1,\infty}^\alpha(\R^d)$ and for every (small)
  $\epsilon>0$, there exists
  $\mcdef{cc:besovtd}=\mcref{cc:besovtd}(\alpha,\epsilon)>0$
  such that
  \[
    \|f_F(t)\|_{B^\alpha_{1,\infty}}
      \leq\frac{\mcref{cc:besovtd}}{(1\wedge t)^{\alpha+\epsilon}}
        (1 + \|x\|_H^2)^{\alpha+\epsilon}.
  \]
\end{lemma}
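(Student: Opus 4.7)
The plan is to apply the smoothing lemma (Lemma~\ref{l:smoothing}) to the law of $\pi_F u^N(t)$ with the time dependence of the constant tracked carefully, then let $N\to\infty$ through lower semicontinuity of the Besov seminorm along the weakly converging Galerkin densities. So the real content is to establish, for every $\phi\in C^\gamma_b(F)$ and $|h|\leq 1$, a bound
\[
  |\E[\Delta_h^m\phi(\pi_F u^N(t))]|
    \leq K_t\,|h|^s\,\|\phi\|_{C^\gamma_b},
\]
with $\gamma<s$ and $s-\gamma$ arbitrarily close to $1$, in which $K_t$ is an explicit function of $t$ and $\|x\|_H$.

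Following \cite{DebRom2014}, I would introduce an auxiliary parameter $\tau_*\in(0,1\wedge t)$ and the modified process $u_{\tau_*}^N$ given by \eqref{e:besovkill} (with $\epsilon$ replaced by $\tau_*$), in which the $\pi_F$-component of the nonlinear drift is switched off on $[t-\tau_*,t]$. Then $u^N=u_{\tau_*}^N$ up to time $t-\tau_*$, and on $[t-\tau_*,t]$ the $F$-projection $\pi_F u_{\tau_*}^N(r)=\pi_F u^N(t-\tau_*)+\pi_F\cov(W_r-W_{t-\tau_*})$ is a Gaussian increment, non-degenerate by \eqref{e:hpbesov}. Writing
\[
  \E[\Delta_h^m\phi(\pi_F u^N(t))]
    = \E[\Delta_h^m\phi(\pi_F u_{\tau_*}^N(t))]
      + \E[\Delta_h^m(\phi(\pi_F u^N(t))-\phi(\pi_F u_{\tau_*}^N(t)))],
\]
I would control the first summand by conditioning on $\field_{t-\tau_*}$ and using the heat semigroup estimate $\|\Delta_h^m P_{\tau_*}\phi\|_\infty\leq C\tau_*^{-(m-\gamma)/2}|h|^m\|\phi\|_{C^\gamma_b}$, where $P_{\tau_*}$ is the semigroup with generator $\tfrac12\Tr(\pi_F\cov\cov^\star\pi_F D^2)$. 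The second summand, whose argument satisfies the explicit identity $\pi_F u^N(t)-\pi_F u_{\tau_*}^N(t)=-\int_{t-\tau_*}^t \pi_F(\nu Au^N+B(u^N))\,dr$, is handled by a further interpolation between $\|\phi\|_\infty$ and $\|\phi\|_{C^\gamma_b}$ combined with the energy estimate \eqref{e:Gbound}, yielding a contribution of the form $\tau_*^{\text{positive}}(1+\|x\|_H^2)^{\text{small}}\|\phi\|_{C^\gamma_b}$.

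Optimizing the balance of the two estimates over $\tau_*\in(0,1\wedge t)$, with $m$ taken large and $\gamma$ arbitrarily close to $\alpha$, delivers $s-\gamma$ arbitrarily close to $1$. In the unconstrained optimization $\tau_*$ would be a positive power of $|h|$; the constraint $\tau_*\leq 1\wedge t$ forces $\tau_*=1\wedge t$ when $t$ is small, and this is precisely the source of the singular factor $(1\wedge t)^{-(\alpha+\epsilon)}$ in the final bound. The smoothing lemma then converts the difference estimate into a bound in $B^\alpha_{1,\infty}$; the matching power $(1+\|x\|_H^2)^{\alpha+\epsilon}$ emerges automatically from the same optimization, since the two multiplicative factors in the balanced expression enter symmetrically.

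The main obstacle is bookkeeping rather than conceptual: extracting the full $|h|^m$ factor from the second summand while simultaneously preserving a useful positive power of $\tau_*$ requires carefully distributing the semigroup regularization and the extra derivative coming from the identity for $\pi_F u^N(t)-\pi_F u_{\tau_*}^N(t)$; and the polynomial moments of $u^N$ that appear must be controlled purely through \eqref{e:Gbound}, so as to yield the advertised dependence on $\|x\|_H$. Once the estimate on the Galerkin laws is uniform in $N$, the passage $N\to\infty$ is routine.
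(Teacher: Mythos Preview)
Your approach is essentially the paper's: introduce the auxiliary process \eqref{e:besovkill}, split into a Gaussian term and a numerical error, balance via the free parameter, and invoke the smoothing lemma. The paper phrases the balancing as a dichotomy on whether $|h|^{2n/(2\gamma+n)}<t$ (in which case the optimization from \cite{DebRom2014} applies verbatim) or not (in which case one is forced to take $\epsilon=t$); this is exactly your constrained optimization over $\tau_*\in(0,1\wedge t)$.

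Two small slips are worth correcting. First, with $\gamma$ close to $\alpha$ and $m$ large your balance gives $s=\tfrac{2m\gamma}{m+\gamma}\to 2\gamma$, so $s-\gamma\to\gamma\approx\alpha$, not close to $1$; but $s-\gamma>\alpha$ is precisely what the smoothing lemma requires for $B^\alpha_{1,\infty}$, and this choice of $\gamma$ is what makes both the time exponent $s/2\approx\gamma$ and the factor $(1+\|x\|_H^2)^\gamma$ come out as $\alpha+\epsilon$. Second, your closing worry about ``extracting the full $|h|^m$ factor from the second summand'' is a red herring: the numerical term carries no $|h|$ factor at all---the crude bound $|\Delta_h^m\phi(\pi_Fu)-\Delta_h^m\phi(\pi_Fu_{\tau_*})|\leq 2^m[\phi]_{C^\gamma_b}\|\pi_Fu-\pi_Fu_{\tau_*}\|^\gamma$ together with \eqref{e:Gbound} already gives the $\tau_*^\gamma(1+\|x\|_H^2)^\gamma$ you wrote down, and that is all that is needed.
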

\begin{proof}
  Given a finite dimensional space $F$ as in the statement,
  fix $t>0$, and let $\gamma\in(0,1)$, $\phi\in C_b^\gamma$,
  and $h\in F$, with $|h|\leq 1$.
  For $m\geq1$, consider two cases. If $|h|^{2n/(2\gamma+n)}<t$,
  then we use the same estimate in \cite{DebRom2014} to get
  \[
    \bigl|\E[\Delta_h^m\phi(\pi_F u(t))]\bigr|
      \leq\mconst(1+\|x\|_H^2)^\gamma\|\phi\|_{C^\gamma_b}
        |h|^{\frac{2n\gamma}{2\gamma+n}}.
  \]
  If on the other hand $t\leq |h|^{2n/(2\gamma+n)}$, we
  introduce the process $u_\epsilon$ as above, but with
  $\epsilon=t$. As in \cite{DebRom2014},
  \[
    \E[\Delta_h^m\phi(\pi_F u(t))]
      = \E[\Delta_h^m\phi(\pi_F u_\epsilon(t))]
        + \E[\Delta_h^m\phi(\pi_F u(t)) - \Delta_h^m\phi(\pi_F u_\epsilon(t))]
  \]
  and
  \[
    \bigl|\E[\Delta_h^m\phi(\pi_F u(t)) - \Delta_h^m\phi(\pi_F u_\epsilon(t))]\bigr|
      \leq \mconst(1 + \|x\|_H^2)^\gamma\|\phi\|_{C^\gamma_b}t^\gamma.
  \]
  For the probabilistic error we use the fact that $u_\epsilon(t)$
  is Gaussian, hence
  \[
    \bigl|\E[\Delta_h^m\phi(\pi_F u_\epsilon(t))]\bigr|
      \leq \mconst\|\phi\|_\infty
        \Bigl(\frac{|h|}{\sqrt{t}}\Bigr)^{\frac{2n\gamma}{2\gamma+n}}
  \]
  In conclusion, from both cases we finally have
  \[
    \bigl|\E[\Delta_h^m\phi(\pi_F u(t))]\bigr|
      \leq \mconst(1+\|x\|_H^2)^\gamma\|\phi\|_{C^\gamma_b}
        |h|^{\frac{2n\gamma}{2\gamma+n}}
        (1\wedge t)^{-\frac{n\gamma}{2\gamma+n}}.
  \]
  Given $\alpha$, suitable choices of $n$ and $\gamma$ yield
  the final result.
\end{proof}
\begin{lemma}\label{l:brownian}
  Let $\beta_r = \pi_F\cov W_r$, $r\geq0$. There is
  $\mcdef{cc:brownian}>0$ such that
  \[
    |\E[\Delta_h^n\phi(a + \beta_r) - \Delta_h^n\phi(a + \beta_s)]|
      \leq\frac{\mcref{cc:brownian}}{r\vee s}\|\phi\|_\infty
        \Bigl(\frac{|h|}{\sqrt{r\wedge s}}\Bigr)^n|r-s|,
  \]
  for every $a\in F$, $n\geq1$, $\phi\in C_c^\infty(F)$, $h\in F$
  with $|h|_F\leq 1$, and $r,s\geq0$.
\end{lemma}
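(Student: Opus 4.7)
The plan is to reduce the claim to an $L^1$ estimate on iterated finite differences of the Gaussian heat kernel. By \eqref{e:hpbesov}, the covariance $\pi_F\cov\cov^\star\pi_F$ is non--singular on $F$, so $\beta_r$ has a smooth density $p_r$ (the centred Gaussian with covariance $r\,\pi_F\cov\cov^\star\pi_F$) with respect to Lebesgue measure on $F$. A change of variables inside the definition of $\Delta_h^n\phi$ gives
\[
  \E[\Delta_h^n\phi(a+\beta_r)] = \int_F\phi(a+y)\,\Delta_{-h}^n p_r(y)\,dy,
\]
hence, taking the difference over $r$ and $s$ and using translation invariance of the $L^1$ norm,
\[
  |\E[\Delta_h^n\phi(a+\beta_r) - \Delta_h^n\phi(a+\beta_s)]|
    \leq \|\phi\|_\infty\,\|\Delta_h^n(p_r - p_s)\|_{L^1(F)}.
\]

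Next, I would invoke the heat equation $\partial_\tau p_\tau = \tfrac12\Tr(\pi_F\cov\cov^\star\pi_F\,D^2 p_\tau)$ to write (assuming $r\leq s$ without loss of generality)
\[
  \Delta_h^n(p_s - p_r)
    = \int_r^s \tfrac12\Tr(\pi_F\cov\cov^\star\pi_F\,D^2\Delta_h^n p_\tau)\,d\tau,
\]
so that the heart of the argument becomes the pointwise--in--$\tau$ bound
\[
  \bigl\|\Tr(\pi_F\cov\cov^\star\pi_F\,D^2\Delta_h^n p_\tau)\bigr\|_{L^1(F)}
    \leq \frac{C_n}{\tau}\Bigl(\frac{|h|}{\sqrt{\tau}}\Bigr)^n,
    \quad \tau>0,\ h\in F.
\]
This estimate follows from the self--similar rescaling $p_\tau(y)=\tau^{-d/2}p_1(y/\sqrt\tau)$, which turns the Laplace-type operator into an overall factor $\tau^{-1}$ and converts $\Delta_h^n$ into $\Delta_{h/\sqrt\tau}^n$ acting on the fixed Gaussian $p_1$. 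For $|h|\leq\sqrt\tau$ one uses the integral representation $\Delta_k^n g(x) = \int_{[0,1]^n} D_k^n g(x+(t_1+\dots+t_n)k)\,dt$ together with $\|D^n\Tr(\pi_F\cov\cov^\star\pi_F\,D^2 p_1)\|_{L^1}<\infty$ to extract a factor $(|h|/\sqrt\tau)^n$; for $|h|>\sqrt\tau$ the crude bound by $2^n\|\Tr(\pi_F\cov\cov^\star\pi_F\,D^2 p_1)\|_{L^1}$ is already dominated by $C_n(|h|/\sqrt\tau)^n$, since that factor exceeds one.

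Finally, I would compute the time integral $\int_{r\wedge s}^{r\vee s}\tau^{-n/2-1}\,d\tau$. Writing $a=r\wedge s$, $b=r\vee s$, factor
\[
  \int_a^b\tau^{-n/2-1}\,d\tau
    = \tfrac2n\bigl(a^{-1/2}-b^{-1/2}\bigr)\bigl(a^{-(n-1)/2}+\dots+b^{-(n-1)/2}\bigr).
\]
Then use $a^{-1/2}-b^{-1/2}=(\sqrt b-\sqrt a)/\sqrt{ab}\leq (b-a)/(b\sqrt a)$ together with the crude bound $a^{-(n-1)/2}$ on each of the $n$ summands, to obtain
\[
  \int_a^b\tau^{-n/2-1}\,d\tau \leq \frac{C_n\,(b-a)}{b\,a^{n/2}}
    = \frac{C_n}{r\vee s}\cdot\frac{|r-s|}{(r\wedge s)^{n/2}},
\]
which, combined with the previous displays, proves the lemma. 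The main obstacle is this integration step: a naive mean--value bound on $a^{-n/2}-b^{-n/2}$ gives the wrong scaling for $n=1$, and the factoring identity above is what produces the uniform $(r\vee s)^{-1}$ prefactor for all $n\geq1$.
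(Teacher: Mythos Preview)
Your argument is correct and supplies exactly the computation that the paper leaves implicit. The paper's proof only performs the linear change of variables $\psi(x)=\phi(a+\mathcal Qx)$ to reduce to a standard Brownian motion and then declares the remaining bound ``a straightforward estimate''; your heat--kernel approach (discrete integration by parts onto $p_\tau$, the scaling $p_\tau(y)=\tau^{-d/2}p_1(y/\sqrt\tau)$, and the factoring $a^{-n/2}-b^{-n/2}=(a^{-1/2}-b^{-1/2})\sum_{j=0}^{n-1}a^{-(n-1-j)/2}b^{-j/2}$ to extract the $(r\vee s)^{-1}$ prefactor) is precisely how one would justify that claim. The only cosmetic difference is that you work directly with the anisotropic Gaussian with covariance $\pi_F\cov\cov^\star\pi_F$ rather than first passing to the isotropic case; this changes nothing, since the covariance enters only through fixed finite constants.
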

\begin{proof}
  By assumption \eqref{e:hpbesov}, $\beta$ is a $d$--dimensional
  Brownian motion with covariance matrix $\pi_F\cov\cov^\star\pi_F$.
  If $\mathcal{Q}$ is a $d\times d$ matrix such that
  $\pi_F\cov\cov^\star\pi_F = \mathcal{Q}\mathcal{Q}^\star$,
  then $\beta_r = \mathcal{Q}B_r$, where $B_r$ is a standard
  $d$--dimensional Brownian motion. The position
  $\psi(x) = \phi(a+\mathcal{Q}x)$ reduces the statement to the
  same for a standard Brownian motion.
  The latter is a straightforward estimate.
\end{proof}
In the rest of the section we will drop, for simplicity
and to make the notation less cumbersome, the index $N$.
It is granted though that we work with solutions of the
Galerkin system \eqref{e:galerkin}.
\begin{lemma}\label{l:numgirsanov}
  Assume \eqref{e:hpgirsanov} and let $v$ be the process introduced
  in Section~\ref{s:girsanov}. Given $\gamma\in(0,1)$, there exists
  $\mcdef{cc:numgirsanov}>0$ such that for every $0<s\leq t$ and every
  bounded measurable $\psi:F\to\R$,
  \begin{multline}\label{e:numgirsanov}
    \big|\E[\psi(\pi_Fu(t)) - \psi(\pi_Fu(s))]
        - \E[\psi(\pi_Fv(t)) - \psi(\pi_Fv(s))]\big|\leq\\
      \leq \mcref{cc:numgirsanov}(1+\|u(0)\|_H^2)^2\log(2+\|u(0)\|_H^2)
        \sqrt{t}(-\log(\tfrac12\wedge t))[\psi]_{C^\gamma_b}
        (t-s)^{\frac\gamma2}.
  \end{multline}
\end{lemma}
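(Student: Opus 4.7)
The approach is to use the Girsanov reduction of Section~\ref{s:girsanov} to recast the Navier--Stokes $\psi$-increment in terms of the truncated process $v^n$ and its density $G^n$, and to compare with the corresponding $\psi$-increment of the auxiliary process $v$. Set $X_r^n:=\psi(\pi_F v^n(r))$ and $X_r:=\psi(\pi_F v(r))$; Girsanov combined with the martingale property $G_r^n=\E[G_t^n\mid\field_r]$ yields $\E[\psi(\pi_Fu(r))]=\E[G_r^n X_r^n]$ for every $r\le t$. Together with the identity $\E[(G_t^n-G_s^n)X_s^n]=0$, a short algebraic manipulation gives
\[
\E[\psi(\pi_Fu(t))-\psi(\pi_Fu(s))] - \E[\psi(\pi_Fv(t))-\psi(\pi_Fv(s))]
= \E[(G_t^n-1)(X_t^n-X_s^n)] + r_n,
\]
where $r_n:=\E[(X_t^n-X_t)-(X_s^n-X_s)]$. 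Since $v^n=v$ for $n$ large enough ($\omega$-wise) and $\psi$ is bounded measurable, bounded convergence gives $r_n\to 0$, so it is enough to bound $|\E[(G_t^n-1)(X_t^n-X_s^n)]|$ uniformly in $n$ and then pass to the limit.

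The core estimate starts from $|\E[(G_t^n-1)(X_t^n-X_s^n)]|\le\E[G_t^n|X_t^n-X_s^n|]+\E[|X_t^n-X_s^n|]$. For the first summand I would apply the elementary inequality \eqref{e:elementary} with $x=G_t^n$ and $y=|X_t^n-X_s^n|$ to obtain
\[
\E[G_t^n|X_t^n-X_s^n|] \le \epsilon\,\E[G_t^n\log G_t^n] + \epsilon\,\E[\e^{|X_t^n-X_s^n|/\epsilon}],
\]
for any $\epsilon>0$. The entropy factor is controlled by Lemma~\ref{l:logG} applied with initial time $0$ (where $G_0^n=1$), giving $\E[G_t^n\log G_t^n]\le\mcref{cc:logG}\sqrt{t}(1+\|u(0)\|_H^2)^2$. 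For the exponential moment I would use the Hölder bound $|X_t^n-X_s^n|\le[\psi]_{C^\gamma_b}|\pi_Fv^n(t)-\pi_Fv^n(s)|^\gamma$ and split on $\{\tau_n(v)\ge t\}$ and $\{\tau_n(v)<t\}$. On the first event $\pi_Fv^n(r)=\pi_Fu(0)+\pi_F\cov W_r$ is Gaussian with variance proportional to $r$, so Gaussian tail estimates produce a finite exponential moment uniform in $n$; on the second event the crude bound $|X_t^n-X_s^n|\le 2\|\psi\|_\infty$ yields a term proportional to $\e^{2\|\psi\|_\infty/\epsilon}\Prob[\tau_n(v)<t]$ that is driven to zero when $n\to\infty$ with $\epsilon$ held fixed, so that $\|\psi\|_\infty$ disappears from the limit bound. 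The second summand $\E[|X_t^n-X_s^n|]$ is treated by the same split, Gaussian moment estimates bounding the Brownian part.

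The final step is an optimization in $\epsilon$: choosing $\epsilon$ of the order $[\psi]_{C^\gamma_b}(t-s)^{\gamma/2}$ (up to a mild correction) balances the exponential-moment contribution with the entropy contribution and yields the target scaling $[\psi]_{C^\gamma_b}(t-s)^{\gamma/2}$. The additional prefactor $(1+\|u(0)\|_H^2)^2\log(2+\|u(0)\|_H^2)\sqrt{t}(-\log(\tfrac12\wedge t))$ emerges from tracking the constants in this balance: for $\gamma\in(0,1)$ the Gaussian exponential moment $\E[\e^{\lambda|Z|^\gamma}]$ is estimated via $|z|^\gamma\le 1+|z|$, producing a bound of the form $\e^{\lambda+\lambda^2/2}$ whose balance against the entropy bound $\sqrt{t}(1+\|u(0)\|_H^2)^2$ is what introduces the logarithmic corrections in $t$ and in $\|u(0)\|_H$. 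The main technical obstacle is executing this optimization carefully while respecting the order of limits, namely eliminating the crude $\|\psi\|_\infty$-dependence from $\{\tau_n(v)<t\}$ by sending $n\to\infty$ before $\epsilon$ is fixed, so that only $[\psi]_{C^\gamma_b}$ survives in the final constant.
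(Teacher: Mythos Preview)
Your algebraic reduction to $\E[(G_t^n-1)(X_t^n-X_s^n)]+r_n$ is correct and essentially the paper's starting point. The gap is in the next step. Once you split $|G_t^n-1|\le G_t^n+1$ and apply the one--parameter inequality \eqref{e:elementary} to $x=G_t^n$, the exponential--moment term contributes $\epsilon\,\E[\e^{|X_t^n-X_s^n|/\epsilon}]$; with your choice $\epsilon\approx[\psi]_{C^\gamma_b}(t-s)^{\gamma/2}$ this is $c\,[\psi]_{C^\gamma_b}(t-s)^{\gamma/2}$, and the second summand $\E[|X_t^n-X_s^n|]$ gives the same. Neither carries a factor of $\sqrt t$. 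Your final bound is therefore
\[
  c\,[\psi]_{C^\gamma_b}(t-s)^{\gamma/2}\bigl(1+\sqrt t\,(1+\|u(0)\|_H^2)^2\bigr),
\]
not the stated one. Your explanation that the logarithmic corrections arise from estimating $\E[\e^{\lambda|Z|^\gamma}]$ and then balancing is not what actually happens: with $\epsilon=[\psi]_{C^\gamma_b}(t-s)^{\gamma/2}$ the exponent is exactly $|Z|^\gamma$, a fixed finite moment, and no further balance occurs. The additive ``$1$'' in your prefactor cannot be removed within this scheme, because the term $\epsilon\,\e^{y/\epsilon}\ge\epsilon$ is already of order $b$ regardless of how small $|G_t^n-1|$ is.

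This is not cosmetic: in the proof of Proposition~\ref{p:besov} the lemma is invoked with $t$ replaced by a small parameter $\epsilon$, and the factor $\sqrt\epsilon(-\log\epsilon)$ is what allows the optimisation against $|h|^n/\epsilon^{1+n/2}$ to produce a positive power of $|h|$. With your constant prefactor the \memo{num} contribution is $c(t-s)^{\gamma/2}$ independently of $\epsilon$, and the smoothing lemma yields nothing. The paper avoids this by applying Young's inequality directly to $|G_t^n-1|$ with \emph{two} scaling parameters: writing $|G_t^n-1|\,|\psi(\beta_t)-\psi(\beta_s)|\le ab\,\e^{|\psi(\beta_t)-\psi(\beta_s)|/b}+ab\,\tfrac{|G_t^n-1|}{a}\log\tfrac{|G_t^n-1|}{a}$, choosing $b=[\psi]_{C^\gamma_b}(t-s)^{\gamma/2}$ as you do, but then choosing $a\approx(1+\|u(0)\|_H^2)^2\log(2+\|u(0)\|_H^2)\sqrt t(-\log(\tfrac12\wedge t))$. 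Both entropy pieces $\E[|G_t^n-1|\log|G_t^n-1|]$ and $\E[|G_t^n-1|]$ are $O(\sqrt t)$ by Lemmas~\ref{l:logG}--\ref{l:Gincrement}, so this choice of $a$ makes the entropy term $O(1)$, while the exponential term now carries the prefactor $ab$, which is exactly the right--hand side of \eqref{e:numgirsanov}. The logarithmic corrections are the price of forcing the $\sqrt t$ through the $a$--scaling, not an artefact of estimating Gaussian moments.
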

\begin{proof}
  We work in the framework introduced in Section~\ref{s:girsanov}.
  Let us denote, for brevity, the left--hand side of
  \eqref{e:numgirsanov} by \memo{num}. We have that
  \[
    \memo{num}
      = \E[G_t^n\bigl(\psi(\pi_F v^n(t)) - \psi(\pi_F v^n(s))\bigr)]
        - \E[\psi(\pi_Fv(t)) - \psi(\pi_Fv(s))]
  \]
  First we  notice that we can replace $v^n$ by $v$ in the
  above formula, up to an error that converges to $0$ as $n\to\infty$.
  Indeed, by Lemma~\ref{l:tobm}, for every $\delta>0$,
  \[
    \begin{multlined}[.9\linewidth]
      \big|\E[G_t^n\psi(\pi_F v^n(t))] - \E[G_t^n\psi(\pi_F v(t))]\big|\leq\\
        \leq \delta\|\psi\|_\infty\bigl(
          \mconst\sqrt{t}(1 + \|x\|_H^2)^2
          + \e^{\frac2\delta}\Prob[\tau_n(v)<t],
    \end{multlined}
  \]
  and likewise at time $s$, where $u(0)=x$.
  After replacing $v^n$ by $v$ we will obtain an estimate that
  is uniform in $n$. By taking first the limit as $n\to\infty$
  and then as $\delta\downarrow0$, the lemma will be proved.

  After this preliminary observation, we see that
  \[
    \memo{num}
      \approx \E[(G_t^n-1)\bigl(\psi(\beta_t) - \psi(\beta_s)\bigr)],
  \]
  where, as in Lemma~\ref{l:rep}, $\beta_t=\pi_Fv(t)$ is a
  d--dimensional Brownian motion. By \eqref{e:elementary}, for
  given $a,b>0$ that will be given later,
  \[
    \memo{num}
      \lesssim ab\E\Bigl[\frac{|G_t^n-1|}{a}\log\frac{|G_t^n-1|}{a}\Bigr]
        + ab\E\Bigl[\exp\Bigl(\frac{|\psi(\beta_t)
          - \psi(\beta_s)|}{b}\Bigr)\Bigr]
        = ab\,\memo{1} + ab\,\memo{2}.
  \]
  Notice that
  \[
    |\psi(\beta_t) - \psi(\beta_s)|
      \leq [\psi]_{C^\gamma_b}|\beta_t-\beta_s|^\gamma
      = [\psi]_{C^\gamma_b}|t-s|^{\frac\gamma2}|Z|^\gamma,
  \]
  where $Z$ is a Gaussian random variable whose distribution does not depend
  on $s,t$. If we choose $b=[\psi]_{C^\gamma_b}|t-s|^{\frac\gamma2}$,
  then $\memo{2}\leq\E[\exp(|Z|^\gamma)]\leq\mconst$.

  For the first term \memo{1} we see that
  \[
    \memo{1}
      = \frac1a\E[|G^n_t-1|\log|G^n_t-1|]
        - \frac{\log a}{a}\E[|G^n_t-1|].
  \]
  The same argument of Lemma~\ref{l:Gincrement}
  (here $G^n_t-1=G^n_t-G^n_0$) yields
  \[
    \E[|G^n_t-1|]
      \leq\mconst(1 + \|x\|_H^2)^2\sqrt{t}
  \]
  Moreover, by Lemma~\ref{l:logG},
  \[
    \begin{multlined}[.95\linewidth]
      \E[|G^n_t-1|\log|G^n_t-1|] =\\
        = \E[(G^n_t-1)\log(G^n_t-1)\uno_{\{G^n_t\geq2\}}]
          + \underbrace{\E[|G^n_t-1|\log|G^n_t-1|\uno_{\{G^n_t\leq2\}}]}_{\leq0}\leq\\
        \leq \E[G^n_t\log G^n_t\uno_{\{G^n_t\geq2\}}]
        \leq \E[G^n_t|\log G^n_t|]
        \leq\mconst\sqrt{t}(1 + \|x\|_H^2)^2.
    \end{multlined}
  \]
  In conclusion
  \[
    \memo{1}
      \leq \mconst\frac{\sqrt{t}}{a}(1 + \|x\|_H^2)^2(1+|\log a|).
  \]
  The choice
  \[
    a
      \approx(1 + \|x\|_H^2)^2\log(2 + \|x\|_H^2)
        \sqrt{t}(-\log(\tfrac12\wedge t)),
  \]
  yields $\memo{1}\leq\mconst$.
\end{proof}
\begin{proof}[Proof of Proposition~\ref{p:besov}]
  Denote by $f$ the density of $\pi_F u$.
  Let $s\leq t$, $\phi\in C^\infty_c(\R^d)$, $h\in F$
  with $|h|_F\leq 1$, and fix the parameters
  $\gamma,\delta\in(0,1)$, $\epsilon>0$,
  $n\geq 3$ that will be chosen along the proof.

  Assume that $t-s\leq|h|^2$,
  then by a discrete integration by parts,
  \[
    \begin{aligned}
      \int_F \phi\Delta_h^n(f(t)-f(s))\,dx
        &= \int_F (f(t) - f(s))\Delta_{-h}^n\phi\,dx\\
        &= \E[\Delta_{-h}^n\phi(\pi_Fu(t)) - \Delta_{-h}^n\phi(\pi_Fu(s))]\\
        &= \underbrace{\E[\Delta_{-h}^n\phi(\pi_Fu(t)) - \Delta_{-h}^n\phi(\pi_Fu_\epsilon(t))]}_{\memo{num${}_t$}}\\
        &\quad  + \underbrace{\E[\Delta_{-h}^n\phi(\pi_Fu_\epsilon(t)) - \Delta_{-h}^n\phi(\pi_Fu_\epsilon(s))]}_{\memo{prob}}\\
        &\quad + \underbrace{\E[\Delta_{-h}^n\phi(\pi_Fu_\epsilon(s)) - \Delta_{-h}^n\phi(\pi_Fu(s))]}_{\memo{num${}_s$}},
    \end{aligned}
  \]
  where $u_\epsilon$ has been defined in \eqref{e:besovkill}.
    
  To estimate $\memo{prob}$, we first point out that we will choose
  $\epsilon$ so that $t-s\leq\frac\epsilon2$. Notice that
  \[
    \memo{prob}
      = \E\bigl[\E[\Delta_{-h}^n\phi(\pi_Fu_\epsilon(t))
        - \Delta_{-h}^n\phi(\pi_Fu_\epsilon(s))]\,|\,\field_{t-\epsilon}]\bigr],
  \]
  and that, given $\field_{t-\epsilon}$, $\pi_Fu_{N,\epsilon}(r)$
  has the same law of $\pi_Fu(t-\epsilon)+\beta_{r-t+\epsilon}$,
  where $\beta$ is the process of Lemma~\ref{l:brownian}.
  Hence, by Lemma~\ref{l:brownian}, and since $t-s\leq\frac\epsilon2$,
  \begin{equation}\label{e:prob}
    \begin{aligned}
      \memo{prob}
        &= \E\bigl[\E[\Delta_{-h}^n\phi(\pi_Fu(t-\epsilon)+\beta_\epsilon)
             - \Delta_{-h}^n\phi(\pi_Fu(t-\epsilon)+\beta_{s-t+\epsilon})]
             |\field_{t-\epsilon}\bigr]\\
        &\leq\frac{\mconst}{\epsilon^{1+\frac{n}2}}\|\phi\|_\infty|h|^n|t-s|.
    \end{aligned}
  \end{equation}
  Let \memo{num} = \memo{num${}_s$} + \memo{num${}_t$}, then
  by conditioning
  \[
    \begin{multlined}[.9\linewidth]
      \memo{num}
        = \E\bigl[\E[\Delta_{-h}^n\phi(\pi_Fu(t))
          - \Delta_{-h}^n\phi(\pi_Fu(s))\,|\field_{t-\epsilon}]\bigr] + {}\\
          - \E\bigl[\E[\Delta_{-h}^n\phi(\pi_Fu_\epsilon(t))
          - \Delta_{-h}^n\phi(\pi_Fu_\epsilon(s))
          \,|\field_{t-\epsilon}]\bigr].
    \end{multlined}
  \]
  We use the Markov property and Lemma~\ref{l:numgirsanov}
  with times $s-t+\epsilon$ and $\epsilon$, and
  $\psi= \Delta_{-h}^n\phi$ to get
  \begin{equation}\label{e:num}
    \begin{aligned}
      \memo{num}
        &\leq \mcref{cc:numgirsanov}
           \E[(1+\|u(t-\epsilon)\|_H^2)^2\log(2+\|u(t-\epsilon)\|_H^2)]
           \sqrt{\epsilon}(-\log\epsilon)[\phi]_{C^\gamma_b}
           (t-s)^{\frac\gamma2}\\
        &\leq\mconst(1+\|x\|_H^2)^3\sqrt{\epsilon}(-\log\epsilon)
           [\phi]_{C^\gamma_b} (t-s)^{\frac\gamma2}.
    \end{aligned}
  \end{equation}
  
  In conclusion \eqref{e:prob} and \eqref{e:num} yield
  \[
    \Big|\int_F \phi\Delta_h^n(f(t)-f(s))\,dx\Big|
      \leq \mconst(1+\|x\|_H^2)^3\|\phi\|_{C^\gamma_b}
        (t-s)^{\frac\gamma2}
        \Bigl(\epsilon^{\frac12(1-\delta)}
          + \frac{|h|^n}{\epsilon^{1+\frac{n}2}}\Bigr),
  \]
  where $\delta\in(0,1)$ has been introduced to get rid of the
  log correction and simplify computations. By optimizing in $\epsilon$
  we choose $\epsilon^{\frac12(n+3-\delta)}\sim|h|^n$, that
  is $\epsilon\sim|h|^{\frac{2n}{n+3-\delta}}$ (the
  exponent of $|h|$ is smaller than $2$, hence
  $(t-s)\lesssim\epsilon$ and $(t-s)$ can be made smaller
  than $\frac\epsilon2$ by a suitable constant). We finally
  have
  \begin{equation}\label{e:smaller}
    \Bigl|\int_F \phi\Delta_h^n(f(t)-f(s))\,dx\Bigr|
      \leq \mconst(1+\|x\|_H^2)^3\|\phi\|_{C^\gamma_b}
           (t-s)^{\frac\gamma2}|h|^{E_n},
  \end{equation}
  with $E_n=\frac{n(1-\delta)}{n+3-\delta}\uparrow1-\delta$.

  If on the other hand $t-s\geq|h|^2$, by integrating by parts once
  in the discrete variable,
  \[
    \begin{aligned}
      \Bigl|\int_F \phi\Delta_h^n(f(t)-f(s))\,dx\Bigr|
        &= \Bigl|\int_F (\Delta_{-h}\phi)\Delta_h^{n-1}(f(t)-f(s))\,dx\Bigr|\\
        &\leq \|\Delta_h^{n-1}(f(t) - f(s))\|_{L^1(F)}\|\Delta_{-h}\phi\|_\infty\\
        &\leq [f(t) - f(s)]_{B^{1-\delta}_{1,\infty}}
          [\phi]_{C^\gamma_b}|h|^{1-\delta+\gamma}\\
        &\leq \bigl([f(t)]_{B^{1-\delta}_{1,\infty}}
           + [f(s)]_{B^{1-\delta}_{1,\infty}}\bigr)
           \|\phi\|_{C^\gamma_b}|h|^{1-\delta+\gamma}.	
    \end{aligned}
  \]

  Since $|h|^2\leq(t-s)$, $|h|\leq 1$, and $E_n\leq 1-\delta$,
  $|h|^{1-\delta+\gamma}\leq (t-s)^{\frac\gamma2}|h|^{E_n}$,
  and we finally get
  \begin{equation}\label{e:larger}
    \Bigl|\int_F \phi\Delta_h^n(f(t)-f(s))\,dx\Bigr|
      \leq \bigl([f(t)]_{B^{1-\delta}_{1,\infty}}
         + [f(s)]_{B^{1-\delta}_{1,\infty}}\bigr)
         \|\phi\|_{C^\gamma_b}(t-s)^{\frac\gamma2}
         |h|^{E_n}.
  \end{equation}
  We have all the ingredients to conclude the proof. Let
  $\beta\in(0,1)$ and $\alpha<1-\beta$, and choose
  $\gamma=\beta$. Choose $\delta$ small enough and
  $n$ large enough that $E_n\geq\alpha+\beta$. Then
  Proposition~\ref{p:elleuno} and the same arguments of
  Lemma~\ref{l:smoothing} yield that
  \[
    \|f(t) - f(s)\|_{B^{\alpha}_{1,\infty}}
      \leq\mcref{cc:besov}(t-s)^{\frac\beta2},
  \]
  where \mcref{cc:besov} is the sum of the contribution
  from Proposition~\ref{p:elleuno} and the maximum between
  the contributions from \eqref{e:smaller} and \eqref{e:larger}.
\end{proof}
\begin{remark}\label{r:besovmaybe}
  A worse estimate can be obtained if one want to avoid
  Girsanov's transformation and assumption \eqref{e:hpgirsanov},
  and rely only on assumption \eqref{e:hpbesov} (at least when
  giving an estimate of the Besov seminorm). Indeed, instead
  of using Lemma~\ref{l:numgirsanov}, we estimate the \memo{num}
  terms in two different ways, to take into account both the control
  by $|t-s|$ and by $\epsilon$. On the one hand, to estimate
  \memo{num${}_s$} and \memo{num${}_t$},
  notice that if $r\in[t-\epsilon,t]$,
  \begin{equation}\label{e:num1}
    |\memo{num${}_r$}|
      \leq 2^n[\phi]_{C^\gamma_b}\E[\|\pi_Fu(r) - \pi_Fu_\epsilon(r)\|_H^\gamma]\\
      \leq \mconst(1 + \|x\|_H^2)^\gamma\epsilon^\gamma[\phi]_{C^\gamma_b},
  \end{equation}
  since
  \[
    \pi_Fu(r) - \pi_F u_\epsilon(r)
      = -\int_{t-\epsilon}^r\bigl(\nu\pi_FAu(\rho) + 
          \pi_F B(u(\rho))\bigr)\,d\rho,
  \]
  hence, by \eqref{e:Gbound},
  \[
    \E[\|\pi_Fu(r) - \pi_Fu_\epsilon(r)\|_H]
      \leq \mconst\int_{t-\epsilon}^r (1+\E[\|u(\rho)\|_H^2)\,d\rho]
      \leq \mconst\epsilon(1 + \|x\|_H^2).
  \]
  On the other hand,
  \begin{equation}\label{e:num2}
    \memo{num${}_s$} + \memo{num${}_t$}
      \leq \mconst[\phi]_{C^\gamma_b}(1+t^{\frac\gamma2})
        (1 + \|x\|_H^2)^\gamma(t-s)^{\frac\gamma2},
  \end{equation}
  since
  \[
    \pi_F(u(t)-u(s))
      = -\pi_F\int_s^t(\nu Au(r)+B(u(r)))\,dr
        + \pi_F\cov(W_t-W_s),
  \]
  hence by \eqref{e:Gbound} and standard estimates on the
  Wiener process,
  \[
    \E[\|\pi_Fu(t) - \pi_Fu(s)\|_H]
      \leq\mconst(1+\sqrt{t})(1+\|x\|_H^2)\sqrt{t-s},
  \]
  (and likewise but simpler for the increment of $u_\epsilon$).

  In conclusion, using \eqref{e:prob}, \eqref{e:num1},
  and \eqref{e:num2}, for every $\lambda\in(0,1)$,
  \[
    \begin{aligned}
      \Bigl|\int_F \phi\Delta_h^n(f(t)-f(s))\,dx\Bigr|
        &\leq \mconst\|\phi\|_{C^\gamma_b}(1+\|x\|_H^2)^\gamma
           (1+t)^{\frac12\gamma(1-\lambda)}\cdot\\
        &\quad\cdot\Bigl(\epsilon^{\lambda\gamma}
          (t-s)^{\frac12\gamma(1-\lambda)}
          + \frac{|h|^n}{\epsilon^{\frac{n+2}2}}(t-s)\Bigr).
    \end{aligned}
  \]
  Optimize in $\epsilon$ and choose $\epsilon\sim
  |h|^{\frac{2n}{n+\gamma(1+\lambda)}}$ (the exponent
  is smaller than $2$, hence $(t-s)\lesssim\epsilon$
  and can be made smaller than $\frac\epsilon2$ by a
  suitable constant) to get
  \begin{equation}\label{e:smaller2}
    \begin{multlined}[.9\linewidth]
      \Bigl|\int_F \phi\Delta_h^n(f(t)-f(s))\,dx\Bigr|\leq\\
        \leq \mconst\|\phi\|_{C^\gamma_b}(1+\|x\|_H^2)^\gamma
             (1+t)^{\frac12\gamma(1-\lambda)}
             (t-s)^{\frac12\gamma(1-\lambda)}
             |h|^{\frac{2n\gamma\lambda}{n+\gamma(1+\lambda)}}.
    \end{multlined}
  \end{equation}
  The case $t-s\geq|h|^2$ yields, as in the previous proof,
  \begin{equation}\label{e:larger2}
    \begin{multlined}[.8\linewidth]
      \Bigl|\int_F \phi\Delta_h^n(f(t)-f(s))\,dx\Bigr|\leq\\
        \leq \bigl([f(t)]_{B^\gamma_{1,\infty}}
            + [f(s)]_{B^\gamma_{1,\infty}}\bigr)
          \|\phi\|_{C^\gamma_b}(t-s)^{\frac12\gamma(1-\lambda)}
          |h|^{\frac{2n\gamma\lambda}{n+\gamma(1+\lambda)}}.
    \end{multlined}
  \end{equation}

  The estimate in $L^1$ would be as follows: given
  $a,b>0$ with $a+2b<1$, choose $\gamma\in(a+2b,1)$
  and $\lambda=1-b/\gamma$, so that $a<(2\lambda-1)\gamma$,
  hence there is $n$ large enough such that
  \[
    \frac{2n\lambda\gamma}{n+\gamma(1+\lambda)}-\gamma
      \leq a,
  \]
  and by using \eqref{e:smaller2} and \eqref{e:larger2}, the same arguments
  of Lemma~\ref{l:smoothing} and Proposition~\ref{p:elleuno} yield that
  \[
    \|f(t) - f(s)\|_{B^a_{1,\infty}}
      \leq \mconst(s,t,a,b)(t-s)^{\frac12b}.
  \]
\end{remark}
\end{document}